%

\documentclass{ws-jktr}
\usepackage{subfigure}

\begin{document}
\makeatletter
\newcommand*{\rom}[1]{\expandafter\@slowromancap\romannumeral #1@}
\makeatother

\markboth{J. Li, T. J. Peters} {ISOTOPIC CONVERGENCE THEOREM}

\catchline{}{}{}{}{}

\title{ISOTOPIC CONVERGENCE THEOREM}

\author{J. Li}
\address{Department of Mathematics, University of Connecticut,\\ Storrs, CT 06269, USA.\\ hamlet.j@gmail.com}

\author{T. J. Peters}
\address{Department of Computer Science and Engineering, University of Connecticut,\\ Storrs, CT 06269, USA.\\tpeters@engr.uconn.edu}

\maketitle

\begin{abstract}
When approximating a space curve, it is natural to consider whether the knot type of the original curve is preserved in the approximant. This preservation is of strong contemporary interest in computer graphics and visualization. We establish a criterion to preserve knot type under approximation that relies upon pointwise convergence and convergence in total curvature. 
\end{abstract}

\keywords{Knot; ambient isotopy; convergence; total curvature; visualization.}

\ccode{Mathematics Subject Classification 2010: 57Q37, 57Q55, 57M25, 68R10}

\section{Introduction}
Curve approximation has a rich history, where the Weierstrass Approximation Theorem is a classical, seminal result \cite{Rudin}.  Curve approximation algorithms typically do not include any guarantees about retaining topological characteristics, such as ambient isotopic equivalence. One may easily obtain a sequence of non-trivial knots converging pointwise to a circle, with the knotted portions of the sequence becoming smaller and smaller. These non-trivial knots will never be ambient isotopic to the circle. However, ambient isotopic equivalence is a fundamental concern in knot theory. Moreover, it is a theoretical foundation for curve approximation algorithms in computer graphics and visualization.

So a natural question is what criterion will guarantee ambient isotopic equivalence for curve approximation? The answer is that, besides pointwise convergence, an additional hypothesis of convergence in total curvature will be sufficient, as we shall prove. An example is shown by Figure~\ref{fig:aa}. 

\begin{figure}[htb]
\centering
    \subfigure[Unknot vs. Knot]
    {
\includegraphics[height=4.5cm]{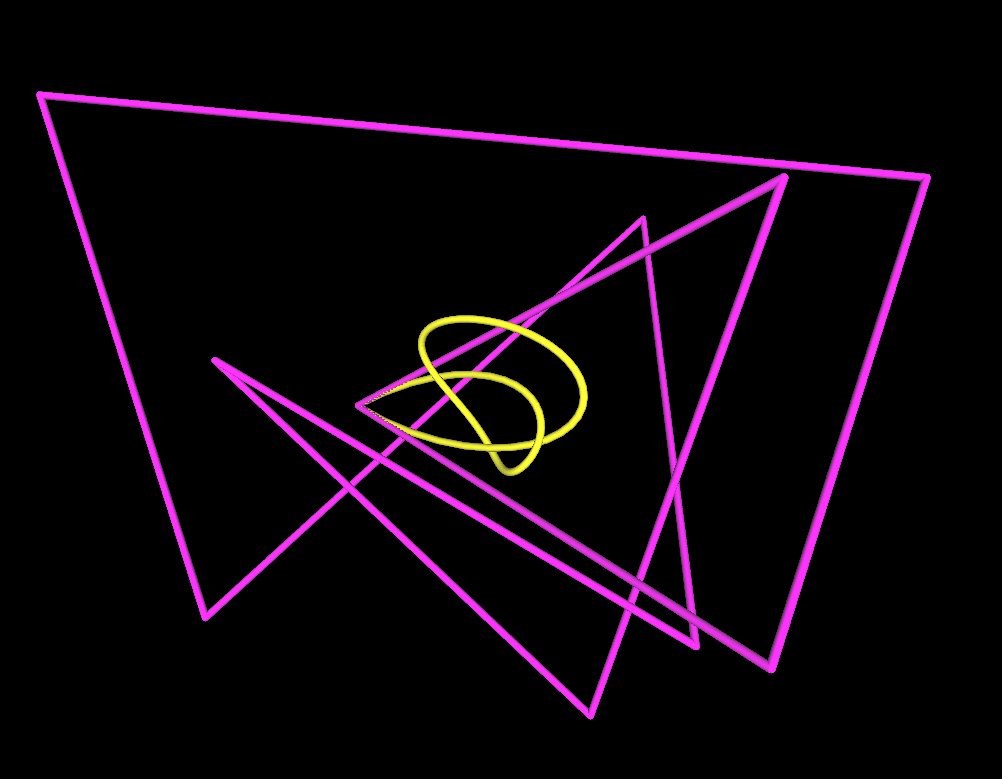}
 \label{fig:ku0}
    }
    \subfigure[Knot vs. Knot]
    {
   \includegraphics[height=4.5cm]{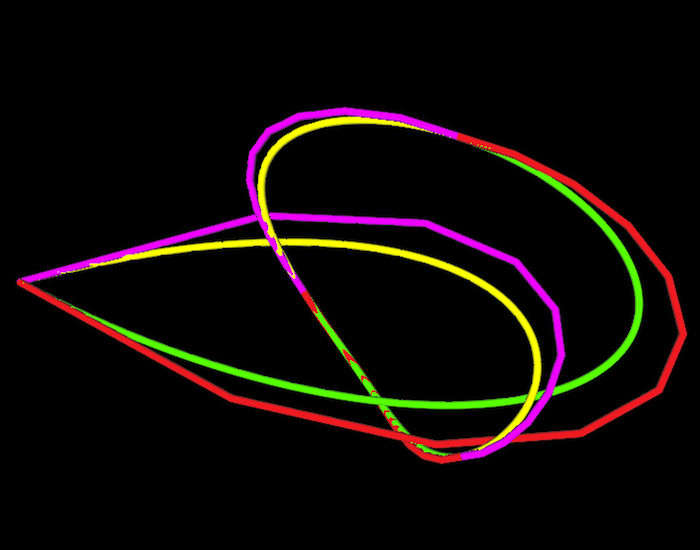}
    \label{fig:ku2}
    }
    \caption{Ambient isotopic approximation}
    \label{fig:aa}
\end{figure}

Figure~\ref{fig:ku0} shows a knotted curve (yellow) which is a trefoil, where this curve is a spline initially defined by an unknotted PL curve (purple), called a control polygon. This PL curve is often treated as the initial approximation of the spline curve. A standard algorithm, called subdivision \cite{G.Farin1990}, is used to generate new PL curves that more closely approximate the spline curve. Figure~\ref{fig:ku2} shows an ambient isotopic approximation generated by subdivision, as this PL approximation is a trefoil. 

There are three main theorems presented. All have a hypothesis of a sequence of curves converging to another smooth curve $\mathcal{C}$.  In Theorem~\ref{thm:insc}, the elements of the sequence are PL inscribed curves.  In Theorem~\ref{tim:bigcurva} and~\ref{thm:distcurv}, the class of curves is generalized to any piecewise $C^2$ curves, with the first being a technical result about a lower bound for the total curvature of elements in some tail of the sequence.  These first two results are used to provide the main result Theorem~\ref{thm:distcurv}, showing that pointwise convergence and convergence in total curvature over this richer class of piecewise $C^2$ curves produce a tail of elements that are ambient isotopic to $\mathcal{C}$.

\section{Related Work}\label{sec:rw}
The Isotopic Convergence Theorem presented here is motivated by the question about topological integrity of geometric models in computer graphics and visualization. But it is a general and pure theoretical result, dealing with the fundamental equivalence relation in knot theory, which may be applied, but extends beyond the limit of any specific applications. 

The preservation of topology in computer graphics and visualization has previously been articulated in two primary applications \cite{TJP2011}:
\begin{enumerate}
\item preservation of  isotopic equivalence by approximations; and 
\item preservation of isotopic equivalence during dynamic changes, such as protein unfolding.
\end{enumerate}

The publications \cite{Amenta2003, L.-E.Andersson2000, Lance2009, Moore_Peters_Roulier2007} are among the first that provided algorithms to ensure an ambient isotopic approximation. The paper \cite{Maekawa_Patrikalakis_Sakkalis_Yu1998} provided existence criteria for a PL approximation of a rational spline curve, but did not include any specific algorithms. 

Recent progress was made for the class of B\'ezier curves, by providing stopping criteria for subdivision algorithms to ensure ambient isotopic equivalence for B\'ezier curves of any degree $n$ \cite{JL-bez-iso}, extending the previous work of \cite{Moore_Peters_Roulier2007}, that had been restricted to degree less than $4$. This extension is based on theorems and sophisticated techniques on knot structures. 

This work here extends to a much broader class of curves, piecewise $C^2$ curves, where there is no restriction on approximation algorithms. Because of its generality, this pure mathematical result is potentially applicable to both theoretical and practical areas.

There exist results in the literature showing ambient isotopy from a different point of view \cite{DenneSullivan2008,  Sullivan2008}. Precisely, there is an upper bound on distance and an upper bound on angles between corresponding points for two curves. If the corresponding distances and angles are within the upper bounds, then they are ambient isotopic. 
 
Milnor \cite{Milnor1950} defined the total curvature for a $C^2$ curve using inscribed PL curves. The extension of the definition to piecewise $C^2$ curves can be trivially done. Consequently, Fenchel's Theorem can be applied to piecewise $C^2$ curves, as we need here. 

Milnor \cite{Milnor1950} also proved the result restricted to inscribed curves. That is a similar version of Theorem~\ref{thm:insc} presented here. That result was recently generalized to finite total curvature knots \cite{DenneSullivan2008}. The application to graphs was also established recently \cite{Gulliver2012}. Our proof here indicates upper bounds on distance and total curvature, which leads to the formulation of algorithms. 

\section{Preliminaries}
\label{sec:ddttc}
\textbf{Use $\mathcal{C}$ to denote a compact, regular, $C^2$, simple, parametric, space curve. Let $\{C_i\}_{1}^{\infty}$ denote a sequence of piecewise $C^2$, parametric curves. Suppose all curves are parametrized on $[0,1]$, that is, $\mathcal{C}=\mathcal{C}(t)$ and $C_i=C_i(t)$ for $t \in [0,1]$. Denote the sub-curve of $\mathcal{C}$ corresponding to [$a,b] \subset [0,1]$ as $\mathcal{C}_{[a,b]}$, and similarly use $C_{i[a,b]}$ for $C_i$. Denote total curvature as a function $T_{\kappa}(\cdot)$.}

\subsection{Total curvatures of piecewise $C^2$ curves}

\begin{definition}[Exterior angles of PL curves]\textup{\cite{Milnor1950}}
\label{def:exterior_angles}
The \textit{exterior angle} between two oriented line segments $\overrightarrow{P_{m-1}P_m}$ and $\overrightarrow{P_mP_{m+1}}$, is the angle between the extension of $\overrightarrow{P_{m-1}P_m}$ and $\overrightarrow{P_mP_{m+1}}$,  as shown in Figure \ref{fig:a}. Let the measure of the exterior angle to be $\alpha_m$ satisfying:
\begin{center}
$0 \leq \alpha_m \leq \pi$.
\end{center}
This definition naturally generalizes to any two vectors, $\vec{v}_1$ and $\vec{v}_2$, by joining these vectors at their initial points, while denoting the measure between them as $\eta(\vec{v}_1,\vec{v}_2)$, as indicated in Figure~\ref{fig:av}.

\end{definition}

\begin{figure}[h!]
\centering
        \subfigure[Orientated lines]
   {   \includegraphics[height=3.5cm]{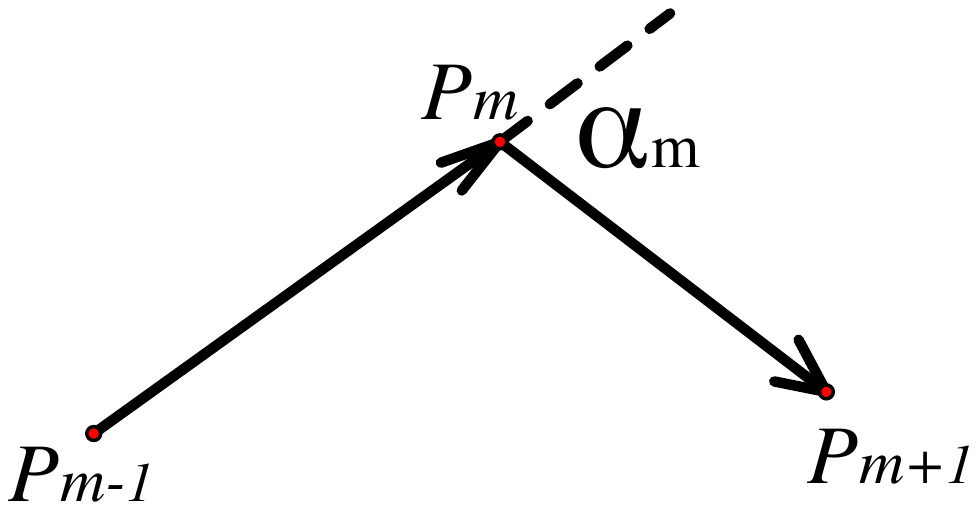}  \label{fig:a} }
        \subfigure[Vectors]
   {   \includegraphics[height=3.5cm]{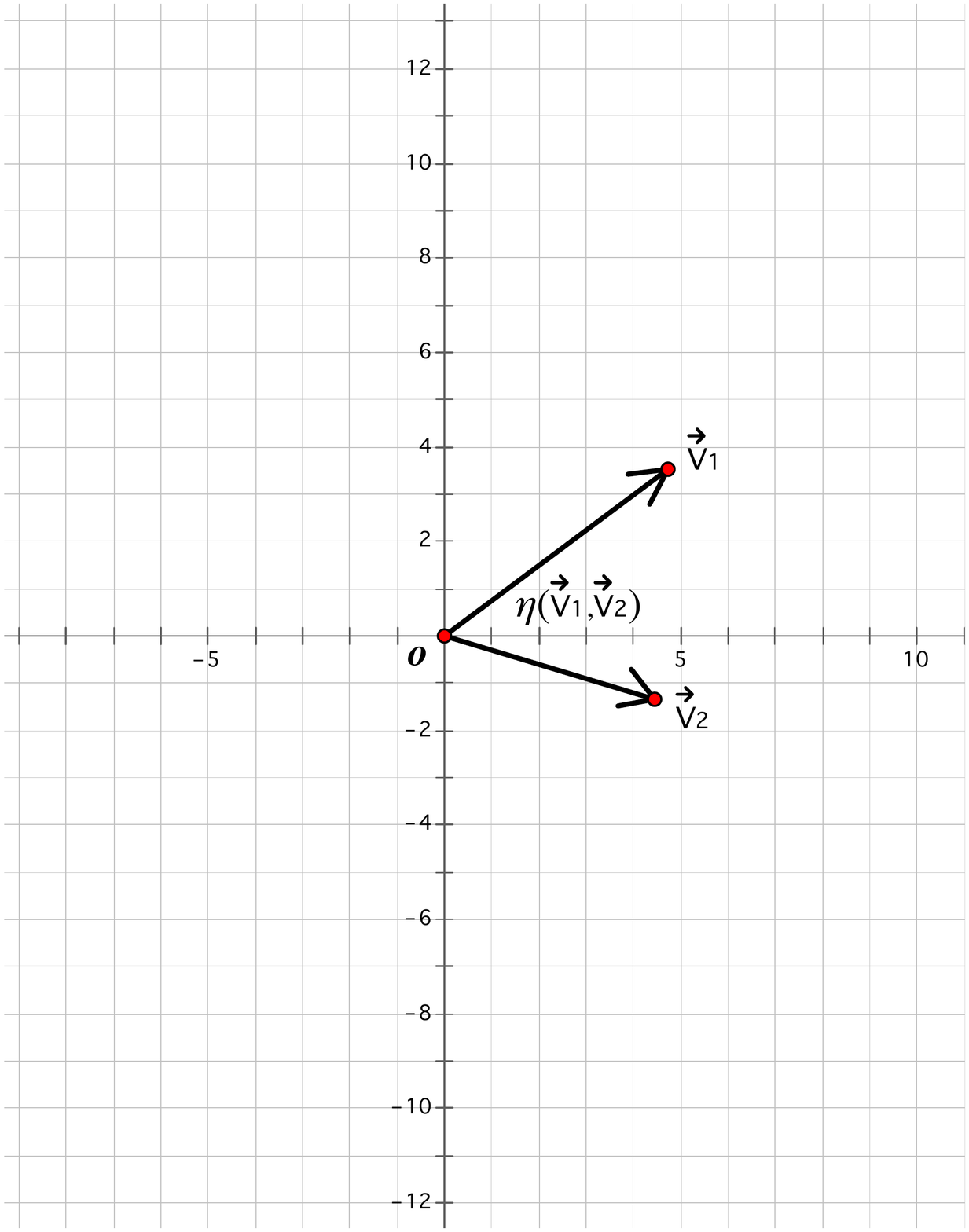}  \label{fig:av} }
    \caption{An exterior angle}    
\label{fig:alpha1}   
 \end{figure}   

The concept of exterior angle is used to unify the concept of total curvature for curves that are PL or  differentiable.  

\begin{definition}[Total curvatures of PL curves] \textup{\cite{Milnor1950}}
The total curvature of a PL curve, is  the sum of the exterior angles.  
\end{definition}

\begin{definition}[Total curvatures of $C^2$ curves] \textup{\cite{Milnor1950}}
Parametrize a $C^2$ curve with arc length $s$ on $[0,\ell]$ and use $\kappa(s)$ to denote the curvature. Then the total curvature of the curve is $\int_0^{\ell} | \kappa(s)| \ ds.$
\end{definition}

\begin{definition}[Exterior angles of piecewise $C^2$ curves]
\label{def:exall}
For a piecewise $C^2$ curve $\gamma(t)$, define the exterior angle at some $t_i$ to be the exterior angle formed by $\gamma'(t_i-)$ and  $\gamma'(t_i+)$ where
$$\gamma'(t_i-) = \lim_{h \rightarrow 0} \frac{\gamma(t_i)-\gamma(t_i-h)}{h},$$
and $$\gamma'(t_i+) = \lim_{h \rightarrow 0} \frac{\gamma(t_i+h)-\gamma(t_i)}{h}.$$
\end{definition}

\begin{definition}\footnote{This is similarly defined in a recent paper \cite{Gulliver2012}.} [Total curvatures of piecewise $C^2$ curves]
\label{def:ttc}
Suppose that a piecewise $C^2$ curve $\phi(t)$ (regular at the $C^2$ points) is not $C^2$ at finitely many parameters $t_1,\cdots,t_n$. Denote the sum of the total curvatures of all the $C^2$ sub-curves as $T_{\kappa1}$, and the sum of exterior angles at $t_1,\cdots,t_n$ as $T_{\kappa2}$. Then the total curvature of $\phi(t)$ is $T_{\kappa1} + T_{\kappa2}$.
\end{definition}

\subsection{Definitions of convergence}

\begin{definition}\label{def:indist}
We say that  $\{C_i\}_{1}^{\infty}$ converges to $\mathcal{C}$ in {\em parametric measure distance} if for any $\epsilon>0$, there exists an integer $N$ such that $\max_{t \in [0,1]} |C_i(t)-\mathcal{C}(t)| < \epsilon$ for all $i \geq N$. 
\end{definition}

\begin{remark}
For compact curves, this convergence in parametric measure distance is equivalent to pointwise convergence.
\end{remark}

\begin{definition} \textup{\cite{J.Munkres1999} }
Let $X$ and $Y$ be two non-empty subsets of a metric space $(M, d)$. We define their Hausdorff distance $\mu(X,Y)$ by
 $$\max\{\sup_{x\in X} \inf_{y\in Y} d(x,y), \sup_{y\in Y} \inf_{x\in X} d(x,y) \}.$$
\end{definition}

\begin{remark}
By the definition of Hausdorff distance, the pointwise convergence implies the convergence in Hausdorff distance. 
\end{remark}

\begin{definition}\label{def:tc}
We say that  $\{C_i\}_{1}^{\infty}$ converges to $\mathcal{C}$ in total curvature if for any $\epsilon>0$, there exists an integer $N$ such that $|T_{\kappa}(C_i)-T_{\kappa}(\mathcal{C})| < \epsilon$ for all $i \geq N$. We designate this property as \textit{convergence in total curvature}.
\end{definition}

\begin{definition}\label{def:ltc}
We say that  $\{C_i\}_{1}^{\infty}$ uniformly converges to $\mathcal{C}$ in total curvature if for any $[t_1,t_2] \subset [0,1]$ and $\forall \epsilon>0$, there exists an integer $N$ such that whenever $i \geq N$, $|T_{\kappa}(C_{i[t_1,t_2]})-T_{\kappa}(\mathcal{C}_{[t_1,t_2]})| < \epsilon$. We designate this property as  \textit{uniform convergence in total curvature}.
\end{definition}

\begin{remark}
Uniform convergence in total curvature implies convergence in total curvature. But the converse is not true. 
\end{remark}


\section{Isotopic Convergence of Inscribed PL Curves}
\label{sec:inc}

We will use the concept of PL inscribed curves as previously defined \cite{Milnor1950}.
\begin{definition} \label{def:ins} 
A closed PL curve $L$ with vertices $v_1,v_2,\cdots,v_m$ is said to be inscribed in  curve $\mathcal{C}(t)$ if there is a sequence $\{t_j\}_{1}^m$ of parameter values such that $v_i=\mathcal{C}(t_j)$ for $j=1,2,\cdots,m$.
We parametrize $L$ over $[0,1]$, denoted as $L(t)$, by
$$
L(t_j)=v_j\ for\ j=0,1,\cdots,m
$$
and $L(t)$ interpolates linearly between vertices.
\end{definition}

The previously established results \cite[Theorem 2.2]{Milnor1950} and \cite[Proposition 3.1]{Sullivan2008} showed that a sequence of finer and finer inscribed PL curves will converge in total curvature. The uniform convergence in total curvature follows easily. For the sake of completeness, we present the proof here. 

\begin{lemma} \label{lem:app}
For a piecewise $C^2$ curve $\gamma(t)$ parametrized on $[0,1]$ (which is regular at all $C^2$ points), a sequence $\{L_i\}_1^{\infty}$ of inscribed PL curves can be chosen such that $\{L_i\}_1^{\infty}$ pointwise converges to $\gamma$ and uniformly converges to $\gamma$ in total curvature.
\end{lemma}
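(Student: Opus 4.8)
The plan is to first handle the genuinely $C^2$ pieces and then bookkeep the finitely many corners. Write the non-$C^2$ parameters as $t_1 < \cdots < t_n$, and let $t_0 = 0$, $t_{n+1} = 1$; on each closed subinterval $[t_{k},t_{k+1}]$ the curve $\gamma$ is $C^2$ and regular. The construction of $L_i$ will be: subdivide $[0,1]$ by a partition $\Pi_i$ that (a) always contains every $t_k$ as a node, and (b) has mesh $\|\Pi_i\| \to 0$ as $i\to\infty$; then let $L_i$ be the inscribed PL curve with vertices $\gamma(s)$ for $s\in\Pi_i$. Requiring each $t_k$ to be a node is the key device: it guarantees that the exterior angle of $L_i$ at the image of $t_k$ is exactly $\eta(\gamma'(t_k-),\gamma'(t_k+))$ in the limit (and in fact converges to it), so the $T_{\kappa 2}$ part of Definition \ref{def:ttc} is matched on the nose, and what remains is to control the $C^2$ pieces.

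For a single $C^2$ regular arc, I would invoke the classical fact (this is exactly \cite[Theorem 2.2]{Milnor1950} together with \cite[Proposition 3.1]{Sullivan2008}) that the total curvature of an inscribed polygon increases to the total curvature of the arc as the mesh goes to zero; more precisely, for a fixed $C^2$ arc $\gamma_{[a,b]}$ and any $\delta>0$ there is a mesh threshold $\rho$ so that any inscribed polygon with mesh below $\rho$ has total curvature within $\delta$ of $\int_a^b|\kappa|\,ds$. Applying this on each of the finitely many pieces $[t_k,t_{k+1}]$ and summing, together with the exact matching of the corner angles, gives $|T_\kappa(L_i) - T_\kappa(\gamma)| \to 0$. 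Pointwise (indeed uniform) convergence $L_i \to \gamma$ is immediate from uniform continuity of $\gamma$ on the compact interval $[0,1]$ and $\|\Pi_i\|\to 0$: each point of $L_i$ lies on a chord joining two $\gamma$-values whose parameters are within $\|\Pi_i\|$, hence within $o(1)$ of $\gamma(t)$.

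The part that takes slightly more care is upgrading ``convergence in total curvature'' to ``\emph{uniform} convergence in total curvature'' in the sense of Definition \ref{def:ltc}, i.e. getting the estimate on \emph{every} subinterval $[t_1',t_2']\subseteq[0,1]$ with a single $N$ depending only on $\epsilon$. Here I would argue as follows: run the same per-piece estimate but track it as a function of the subinterval. On the $C^2$ pieces, the Milnor--Sullivan inequalities give, for meshes below a uniform threshold, a \emph{one-sided} bound $T_\kappa(L_{i[a,b]}) \le \int_a^b|\kappa|\,ds$ for every subinterval, plus a uniform-in-subinterval lower bound because the gap between the polygon's angle sum and the integral is itself controlled by the mesh via the $C^2$ modulus of continuity of the unit tangent, which is uniform on the compact arc. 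The corners contribute only finitely many terms, each of which is pinned (up to a mesh-dependent error, uniformly) to the true exterior angle and is either entirely inside or outside $[t_1',t_2']$. Combining these, the discrepancy $|T_\kappa(L_{i[t_1',t_2']}) - T_\kappa(\gamma_{[t_1',t_2']})|$ is bounded by a quantity that depends on $i$ but not on the particular subinterval, and this bound tends to $0$.

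The main obstacle I anticipate is precisely this uniformity over subintervals: one must avoid an argument whose error accumulates with the number of partition cells meeting $[t_1',t_2']$ (which is unbounded as $i\to\infty$). The resolution is to use the \emph{monotone} (one-sided) nature of the inscribed-polygon total curvature on $C^2$ arcs — so the polygon never overshoots — and then to bound the \emph{deficit} by the $C^2$ modulus of continuity of the tangent direction, which converts "sum over many small cells of small angles" into a single telescoping/integral estimate rather than a cell count. Everything else (the corner nodes, pointwise convergence, finiteness of $n$) is routine bookkeeping.
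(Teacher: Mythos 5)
Your construction and argument match the paper's own proof essentially step for step: both fix the non-$C^2$ parameters as permanent nodes, refine by driving the mesh to zero, obtain the corner contributions from the convergence of chord directions to the one-sided tangents, and invoke Milnor's Theorem~2.2 on each $C^2$ piece. Your discussion of why the total-curvature estimate is uniform over all subintervals is in fact more careful than the paper's (which simply asserts that the two ingredients ``together imply'' uniform convergence), but it is a refinement of the same approach rather than a different one.
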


\begin{proof}
We first take the end points $\gamma(t_0) = \gamma(0)$ and $\gamma(t_n) = \gamma(1)$. And then select\footnote{Acute readers may find later that this choice of points is sufficient for this lemma, but not necessary. This choice is for ease of exposition.} the points where $\gamma$ fails to be $C^2$. Denoted these points as $\{\gamma(t_0), \gamma(t_1),\cdots,\gamma(t_{n-1}), \gamma(t_n)\}$. We then compute midpoints: $\gamma(\frac{t_j+t_{j+1}}{2})$ for $j\in \{0,1,\ldots,n-1\}$ to form $L_2$ which is determined by vertices: 
$$\{\gamma(t_0), \gamma(\frac{t_0+t_1}{2}), \gamma(t_1), \cdots, \gamma(t_{n-1}), \gamma(\frac{t_{n-1}+t_n}{2}), \gamma(t_n)\}.$$ 
Continuing this process, we obtain a sequence $\{L_i\}_1^{\infty}$ of inscribed PL curves. 

Suppose the set of vertices of $L_i$ is $\{ v_{i,k}=\gamma(t_{i,k}) \}$, for some finitely many parameter values $t_{i,k}$. Use uniform parametrization \cite{Morin_Goldman2001} for $L_i$ such  that $v_{i,k}=L_i(t_{i,k})$, and points between each pair of consecutive vertices are interpolated linearly. Note first that this process implies that $\{L_i\}_1^{\infty}$ pointwise converges to $\mathcal{C}$. For the uniform convergence in total curvature, consider the following:

\begin{enumerate}
\item Consider each $t_j$ where $\gamma$ fails to be $C^2$. Denote the parameters of two vertices of $L_i$ adjacent to $L_i(t_j)$ as $t^i_{j1}$ and  $t^i_{j2}$. Note that $\lim _{i \rightarrow \infty}t^i_{j1}=\lim _{i \rightarrow \infty}t^i_{j2}=t_j$. This implies that the slope of $\overrightarrow{L_i(t^i_{j1})L_i(t_j)}$ and the slope of $\overrightarrow{L_i(t^i_{j2})L_i(t_j)}$ go to $\gamma'(t_j-)$ and $\gamma'(t_j+)$ respectively. This shows that
$$\lim_{i \rightarrow \infty} \eta(\overrightarrow{L_i(t^i_{j1})L_i(t_j)}, \overrightarrow{L_i(t^i_{j2})L_i(t_j)})= \eta(\gamma'(t_j-),\gamma'(t_j+)).$$

\item For a $C^2$ sub-curve of $\gamma$, the proof of \cite[Theorem 2.2] {Milnor1950} shows that the total curvatures of the corresponding inscribed PL curves converge to the total curvature of the $C^2$ sub-curve. 
\end{enumerate}

By Definition~\ref{def:ttc}, the above (1) and (2) together imply the uniform convergence in total curvature.  
\end{proof}

Since uniform convergence in total curvature implies  convergence in total curvature (Definition~\ref{def:ltc}), the corollary below follows immediately.
\begin{corollary}\label{coro:app} \textup{\cite[Theorem 2.2]{Milnor1950} \cite[Proposition 3.1]{Sullivan2008}}
For $\mathcal{C}$, a sequence $\{L_i\}_1^{\infty}$ of inscribed PL curves can be chosen such that $\{L_i\}_1^{\infty}$ converges to $\mathcal{C}$ pointwise and in total curvature.
\end{corollary}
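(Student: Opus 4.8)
The plan is to read the corollary off directly from Lemma~\ref{lem:app}. First I would observe that the hypotheses of the corollary are strictly stronger than those of the lemma: since $\mathcal{C}$ is $C^2$ and regular everywhere on $[0,1]$, it is in particular a piecewise $C^2$ curve that is regular at all of its $C^2$ points, with the exceptional set $\{t_1,\ldots,t_n\}$ of Definition~\ref{def:ttc} empty. Hence Lemma~\ref{lem:app} applies verbatim with $\gamma=\mathcal{C}$, and it produces a sequence $\{L_i\}_1^{\infty}$ of inscribed PL curves — those obtained by iterated midpoint refinement starting from the vertex set $\{\mathcal{C}(0),\mathcal{C}(1)\}$ — that pointwise converges to $\mathcal{C}$ and uniformly converges to $\mathcal{C}$ in total curvature.

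Second, I would invoke the Remark recorded just after Definition~\ref{def:ltc}: uniform convergence in total curvature implies convergence in total curvature. Applying it to the sequence $\{L_i\}_1^{\infty}$ just produced yields that $\{L_i\}_1^{\infty}$ converges to $\mathcal{C}$ in total curvature in the sense of Definition~\ref{def:tc}. Combined with the pointwise convergence already in hand, this is exactly the assertion of the corollary, so the argument is complete. (As a byproduct this recovers the classical statements \cite[Theorem 2.2]{Milnor1950} and \cite[Proposition 3.1]{Sullivan2008} in the present notation.)

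There is no real obstacle here: the substantive work — the refinement construction and the appeal to Milnor's \cite[Theorem 2.2]{Milnor1950} handling each $C^2$ arc — has already been carried out inside Lemma~\ref{lem:app}. The only point deserving a moment's care is the purely definitional one that a globally $C^2$, regular curve qualifies as ``piecewise $C^2$, regular at all $C^2$ points,'' which is immediate since such a curve has no non-$C^2$ parameters at all.
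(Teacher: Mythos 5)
Your argument is correct and is exactly the paper's: the corollary is read off from Lemma~\ref{lem:app} applied to $\gamma=\mathcal{C}$ (a $C^2$ curve being trivially piecewise $C^2$), followed by the observation that uniform convergence in total curvature implies convergence in total curvature. No issues.
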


\begin{theorem} [Fenchel's Theorem] \textup{\cite{Milnor1950}} \label{thm:gen-fenchel}
The total curvature of a closed curve is at least $2 \pi$, with equality holding if and only if the curve is convex.
\end{theorem}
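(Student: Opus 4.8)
The plan is to reduce to the case of a closed polygon and then prove the polygonal version by the integral--geometric (Crofton--type) argument in the style of Milnor. For the inequality, given the closed curve $\mathcal{C}$, apply Corollary~\ref{coro:app} to obtain inscribed PL curves $L_i$ with $T_{\kappa}(L_i)\to T_{\kappa}(\mathcal{C})$; since total curvature of a polygon is just the sum of its exterior angles, it then suffices to prove $T_{\kappa}(L)\ge 2\pi$ for every closed polygon $L$, whence $T_{\kappa}(\mathcal{C})=\lim_i T_{\kappa}(L_i)\ge 2\pi$. This reduction is legitimate even for an arbitrary closed rectifiable curve, since in Milnor's framework such a curve has total curvature equal to the supremum over its inscribed polygons.

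Let $L$ have vertices $P_1,\dots,P_n$ with exterior angles $\alpha_1,\dots,\alpha_n$, so $T_{\kappa}(L)=\sum_{m}\alpha_m$. For a unit vector $\xi\in S^2$ that is not orthogonal to any edge of $L$ --- a set of full measure --- the height function $h_\xi(p)=\langle p,\xi\rangle$ has no ties along edges; let $\mu(\xi)$ be the number of strict local maxima of $h_\xi$ on $L$. The first key step is an area computation: $P_m$ is a local maximum of $h_\xi$ precisely when $\langle P_m-P_{m-1},\xi\rangle\ge 0$ and $\langle P_m-P_{m+1},\xi\rangle\ge 0$, and the set of such $\xi$ is a spherical lune of dihedral angle $\alpha_m$ (the angle between $P_m-P_{m-1}$ and $-(P_{m+1}-P_m)$ being $\pi-\alpha_m$), hence of area $2\alpha_m$. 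Summing over $m$ and applying Fubini,
\[
\int_{S^2}\mu(\xi)\,d\xi=\sum_{m}2\alpha_m=2\,T_{\kappa}(L).
\]
The second key step is immediate: a closed (hence compact) curve has a global maximum of $h_\xi$, so $\mu(\xi)\ge 1$ for every admissible $\xi$. Hence $2\,T_{\kappa}(L)=\int_{S^2}\mu(\xi)\,d\xi\ge\int_{S^2}1\,d\xi=4\pi$, i.e. $T_{\kappa}(L)\ge 2\pi$, and therefore $T_{\kappa}(\mathcal{C})\ge 2\pi$.

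For the equality case, $T_{\kappa}(L)=2\pi$ forces $\mu(\xi)=1$ for almost every $\xi$; by the up--then--down behavior of $h_\xi$ around the loop this also forces a single local minimum, so each value strictly between $\min h_\xi$ and $\max h_\xi$ is attained exactly twice, i.e. $L$ meets every plane transverse to it in exactly two points. By the standard characterization of convex curves through their plane sections, $L$ is then a planar convex polygon. To transfer this to $\mathcal{C}$: if $T_{\kappa}(\mathcal{C})=2\pi$, then since $T_{\kappa}(L)\le T_{\kappa}(\mathcal{C})$ for every inscribed polygon (Milnor) while $T_{\kappa}(L)\ge 2\pi$, every inscribed polygon of $\mathcal{C}$ is planar and convex; fixing three non-collinear points of $\mathcal{C}$ shows $\mathcal{C}$ lies in their common plane, and convexity of all inscribed polygons yields convexity of $\mathcal{C}$. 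The converse is clear, since a planar convex curve has only convex inscribed polygons, each of total curvature $2\pi$, so $T_{\kappa}(\mathcal{C})=2\pi$.

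The \textbf{main obstacle} is the rigidity (equality) half: the lune identity and the bound $\mu\ge 1$ are routine, but deducing ``planar and convex'' from ``$\mu\equiv 1$'', and then lifting the polygonal conclusion to $\mathcal{C}$, needs the plane-section characterization of convexity together with the monotonicity $T_{\kappa}(L)\le T_{\kappa}(\mathcal{C})$. A minor technical nuisance is the genericity bookkeeping for $\xi$ (discarding the measure-zero set of directions orthogonal to an edge, and avoiding degenerate inscribed polygons with collinear vertices). An alternative for the inequality alone is to argue on the tangent indicatrix: the unit edge directions $e_1,\dots,e_n$ of $L$, joined in order by minor great-circle arcs, form a closed spherical polygon of length $T_{\kappa}(L)$; because $\sum_m \ell_m e_m=0$ (with $\ell_m$ the edge lengths) this polygon cannot lie in any open hemisphere, and a closed spherical curve with that property has length at least $2\pi$. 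I would keep the integral-geometric version as primary, since its quantity $\mu(\xi)$ is precisely what the equality analysis consumes.
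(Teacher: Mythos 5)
The paper offers no proof of this statement: Fenchel's theorem is quoted directly from Milnor's 1950 paper as background, so there is no internal argument to compare yours against. What you have written is, in substance, Milnor's own proof, and it is correct. The reduction to closed polygons via Corollary~\ref{coro:app}, together with the fact that total curvature is the supremum over inscribed polygons, is legitimate; the lune computation giving $\int_{S^2}\mu(\xi)\,d\xi = 2\,T_{\kappa}(L)$ is right (the set of directions for which $P_m$ is a local maximum is the intersection of the two hemispheres $\langle P_m-P_{m-1},\xi\rangle\ge 0$ and $\langle P_m-P_{m+1},\xi\rangle\ge 0$, whose poles are at angle $\pi-\alpha_m$, hence a lune of area $2\alpha_m$); and $\mu(\xi)\ge 1$ on the full-measure set of admissible directions closes the inequality. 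Two ingredients are cited rather than proved, and you correctly flag them: the monotonicity $T_{\kappa}(L)\le T_{\kappa}(\mathcal{C})$ under inscription (Milnor's Theorem~2.2, i.e.\ total curvature as a supremum over inscribed polygons), and the plane-section characterization of convexity used to pass from $\mu\equiv 1$ almost everywhere to ``planar and convex.'' Both are standard and are exactly the ingredients of Milnor's treatment, so nothing essential is missing; if this argument were actually to be written out in full, the equality half would deserve the most care (handling directions for which a plane contains a whole edge, and verifying that a simple closed curve contains three non-collinear points before concluding planarity of $\mathcal{C}$ from planarity of all inscribed quadrilaterals).
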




\begin{lemma}\label{lem:npwint}
Denote the plane normal to $\mathcal{C}$ at some $t_0 \in (0,1)$ as $\Pi(t_0)$. Consider two sub-curves $\mathcal{C}_{[t_0-u]}$ and $\mathcal{C}_{[t_0+v]}$ for some $u\in (0,t_0)$ and $v\in (t_0,1)$. If both $T_{\kappa}(\mathcal{C}_{[t_0-u]})<\frac{\pi}{2}$ and $T_{\kappa}(\mathcal{C}_{[t_0+v]})<\frac{\pi}{2}$, then these two sub-curves $\mathcal{C}_{[t_0-u]}$ and $\mathcal{C}_{[t_0+v]}$ are separated by $\Pi(t_0)$ except at $\mathcal{C}(t_0)$.
\end{lemma}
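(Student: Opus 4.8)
The plan is to show that the tangent direction $\mathcal{C}'(t_0)$ serves as a common ``reference direction'' from which neither sub-curve can deviate by as much as $\pi/2$, and hence each sub-curve stays strictly on one side of the normal plane $\Pi(t_0)$. Concretely, let $N = \mathcal{C}'(t_0)/|\mathcal{C}'(t_0)|$ be the unit tangent at $t_0$, which is the normal vector to the plane $\Pi(t_0)$. A point $\mathcal{C}(s)$ lies in $\Pi(t_0)$ exactly when $\langle \mathcal{C}(s) - \mathcal{C}(t_0), N\rangle = 0$; it lies on the positive side when this inner product is positive. So it suffices to prove that for $s \in [t_0-u, t_0)$ one has $\langle \mathcal{C}(s)-\mathcal{C}(t_0), N\rangle < 0$, and for $s \in (t_0, t_0+v]$ one has $\langle \mathcal{C}(s)-\mathcal{C}(t_0), N\rangle > 0$ (or the symmetric pair of signs); either way the two sub-curves meet $\Pi(t_0)$ only at $\mathcal{C}(t_0)$ and lie in opposite open half-spaces.

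The key step is a monotonicity argument driven by the total-curvature bound. Write $g(s) = \langle \mathcal{C}(s)-\mathcal{C}(t_0), N\rangle$, so $g(t_0)=0$ and $g'(s) = \langle \mathcal{C}'(s), N\rangle = |\mathcal{C}'(s)|\cos\theta(s)$, where $\theta(s) = \eta(\mathcal{C}'(s), \mathcal{C}'(t_0))$ is the angle between the tangent at $s$ and the tangent at $t_0$. The standard fact (this is exactly Milnor's estimate, and it underlies Fenchel's Theorem, Theorem~\ref{thm:gen-fenchel}) is that for a $C^2$ arc the angle swept by the unit tangent is bounded by the total curvature of that arc: for $s \in [t_0+v]$, $\theta(s) \le T_{\kappa}(\mathcal{C}_{[t_0, s]}) \le T_{\kappa}(\mathcal{C}_{[t_0+v]}) < \pi/2$, and likewise $\theta(s) < \pi/2$ for $s \in [t_0-u, t_0]$. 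Hence $\cos\theta(s) > 0$ throughout, so $g'(s) > 0$ for every $s \in [t_0-u, t_0+v]$ (using regularity, $|\mathcal{C}'(s)| > 0$). Therefore $g$ is strictly increasing on $[t_0-u, t_0+v]$, which gives $g(s) < g(t_0) = 0$ for $s \in [t_0-u, t_0)$ and $g(s) > 0$ for $s \in (t_0, t_0+v]$. This places $\mathcal{C}_{[t_0-u]}$ strictly in the open half-space $\{x : \langle x - \mathcal{C}(t_0), N\rangle < 0\}$ and $\mathcal{C}_{[t_0+v]}$ strictly in $\{x : \langle x - \mathcal{C}(t_0), N\rangle > 0\}$, with the only common point of either sub-curve with $\Pi(t_0)$ being $\mathcal{C}(t_0)$ itself, as claimed.

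I expect the main obstacle to be stating and invoking cleanly the inequality $\theta(s) \le T_{\kappa}(\mathcal{C}_{[t_0,s]})$, i.e. that the total turning of the tangent indicatrix over a sub-arc bounds the angle between its endpoint tangents. This is intuitively the triangle inequality on the unit sphere applied to the spherical image of the tangent, and it is precisely the ingredient Milnor uses; since the paper already leans on \cite{Milnor1950} for Fenchel's Theorem and for the total-curvature machinery, I would cite it rather than reprove it. A minor secondary point is the sub-curve at the endpoint: because $t_0 \in (0,1)$ and $\mathcal{C}$ is $C^2$, $\mathcal{C}'$ is continuous at $t_0$ so $\theta$ is continuous with $\theta(t_0)=0$, and no piecewise issue arises; if one wanted the analogous statement for a piecewise $C^2$ curve one would add the exterior angle at $t_0$ into the bound, but that is not needed here. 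The orientation bookkeeping (which sub-curve lands on which side) is purely a sign convention and poses no real difficulty.
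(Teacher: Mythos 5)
Your proof is correct, but it takes a genuinely different route from the paper's. The paper argues by contradiction: if, say, $\mathcal{C}_{[t_0+v]}$ met $\Pi(t_0)$ at a second point $b$, then closing that sub-curve with the chord $\overline{ab}$ (where $a=\mathcal{C}(t_0)$) produces a closed curve whose total curvature is at most $T_{\kappa}(\mathcal{C}_{[t_0+v]})+\frac{\pi}{2}+\pi$ (the exterior angle at $a$ is exactly $\frac{\pi}{2}$ because the chord lies in the normal plane, and the one at $b$ is at most $\pi$), and Fenchel's Theorem then forces $T_{\kappa}(\mathcal{C}_{[t_0+v]})\geq\frac{\pi}{2}$, a contradiction. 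You instead argue directly, showing the height function $g(s)=\langle\mathcal{C}(s)-\mathcal{C}(t_0),N\rangle$ is strictly monotone because the tangent direction cannot turn by $\frac{\pi}{2}$ from $\mathcal{C}'(t_0)$; the ingredient you import from Milnor is the estimate ``angle between endpoint tangents $\leq$ total curvature of the arc'' (the spherical triangle inequality applied to the tangent indicatrix), whereas the paper imports Fenchel's Theorem. Both ingredients are standard and both appear in \cite{Milnor1950}, so neither proof is more expensive in prerequisites. Your version is constructive rather than by contradiction and yields the slightly sharper conclusion that each sub-curve minus $\mathcal{C}(t_0)$ lies in a \emph{specified} open half-space (the paper's argument only rules out a second intersection with the plane and leaves the ``opposite sides'' bookkeeping implicit); the paper's version has the advantage of not requiring the sub-curves to be $C^1$, so it transfers verbatim to piecewise-smooth or merely finite-total-curvature arcs, though for the $C^2$ curve $\mathcal{C}$ at hand this makes no difference.
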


\begin{proof}
Denote the point $\mathcal{C}(t_0)$ as $a$. Suppose that the conclusion is false, then either $\mathcal{C}_{[t_0-u]}$ or $\mathcal{C}_{[t_0+v]}$ intersects $\Pi(t_0)$ other than at $a$. Assume without loss of generality that $\mathcal{C}_{[t_0+v]} \cap \Pi(t_0)$ contains another point, denoted as $b$. Then the sub-curve $\mathcal{C}_{[t_0+v]}$ and the line segment $\overline{ab}$ form a closed curve $\mathcal{C}_{[t_0+v]} \cup \overline{ab}$. So $T_{\kappa}(\mathcal{C}_{[t_0+v]} \cup \overline{ab}) \geq 2\pi$ by Theorem~\ref{thm:gen-fenchel}. 

Denote the exterior angles at $a$ and $b$ as $\alpha$ and $\beta$ respectively. Then $\alpha=\frac{\pi}{2}$ since $\Pi(t_0)$ is normal to $\mathcal{C}'(t_0)$. By Definition~\ref{def:exterior_angles}, $\beta \leq \pi$. By Definition~\ref{def:ttc} we have
$$T_{\kappa}(\mathcal{C}_{[t_0+v]} \cup \overline{ab}) =T_{\kappa}(\mathcal{C}_{[t_0+v]}) + \alpha + \beta \leq T_{\kappa}(\mathcal{C}_{[t_0+v]}) + \frac{\pi}{2} +\pi.$$
So $$T_{\kappa}(\mathcal{C}_{[t_0+v]}) + \frac{\pi}{2} +\pi \geq 2\pi.$$
Therefore 
$$T_{\kappa}(\mathcal{C}_{[t_0+v]}) \geq \frac{\pi}{2},$$
which is a contradiction.  
\end{proof}

Theorem~\ref{thm:insc} below is restricted to ``inscribed PL curves". The general theorem of ``piecewise $C^2$ curves, either inscribed or not" will be established later in Theorem~\ref{thm:distcurv}. 

\begin{theorem}\label{thm:insc}
For any sequence $\{L_i\}_1^{\infty}$ of inscribed PL curves that pointwise converges to $\mathcal{C}$ and uniformly converges to $\mathcal{C}$ in total curvature, a positive integer $N$ can be found as below such that for all $i > N$, $L_i$ is ambient isotopic to $\mathcal{C}$.
\end{theorem}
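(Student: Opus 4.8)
The plan is to show that for all large $i$ the map $t\mapsto\pi(L_i(t))$ which projects $L_i$ onto $\mathcal C$ along normal planes is injective, so that $L_i$ is an embedded section of the normal-disk bundle of $\mathcal C$, and then to straighten that section onto $\mathcal C$ by an ambient isotopy supported in a tube. First I would fix the tube: since $\mathcal C$ is compact, regular, $C^2$ and simple (an embedded circle), the standard tubular-neighborhood construction — made explicit through Lemma~\ref{lem:npwint} applied to short sub-arcs, whose total curvatures fall below $\pi/2$ because $\mathcal C$ is $C^2$, together with compactness — produces $\rho>0$ so that $\Phi(t,w)=\mathcal C(t)+w$ (for $w\in\Pi(t)$, $|w|<\rho$) is a $C^1$ diffeomorphism onto an open tube $T\supset\mathcal C$ whose fibres are the pairwise disjoint disks $D(t)=\Pi(t)\cap T$; write $\pi\colon T\to\mathcal C$, $\pi(\Phi(t,w))=\mathcal C(t)$, for the bundle projection. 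It then suffices to prove that for $i$ large $L_i\subset T$ and $t\mapsto\pi(L_i(t))$ is injective on the parameter circle: then $L_i$ is simple and $\pi|_{L_i}$ is a continuous bijection of circles, hence a homeomorphism, so $L_i$ is the graph of a continuous section $\sigma=(\pi|_{L_i})^{-1}$, and pushing the zero section $\mathcal C$ radially onto $\sigma$ in each fibre — damped by a bump function near $\partial T$, extended by the identity off $T$, legitimate because $\sup|\sigma|$ can be made arbitrarily small by pointwise convergence — is an ambient isotopy of $\mathbf R^3$ carrying $\mathcal C$ to $L_i$.

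Second, I would fix once and for all a partition $0=s_0<s_1<\cdots<s_m=1$ with $T_\kappa(\mathcal C_{[s_{k-1},s_k]})<\varepsilon_0$ for a small absolute constant $\varepsilon_0$ (say $\pi/16$; possible since $\int|\kappa|\,ds$ is absolutely continuous), chosen fine enough that each $\mathcal C_{[s_{k-1},s_k]}$ is small compared with $\rho$. Pointwise convergence gives $N_1$ so that $L_i\subset T$ and $\max_t|L_i(t)-\mathcal C(t)|$ is as small as needed for $i>N_1$; \emph{uniform} convergence in total curvature, applied to the finitely many intervals $[s_{k-1},s_k]$, gives $N_2$ so that $T_\kappa(L_{i[s_{k-1},s_k]})<\varepsilon_0$ for every $k$ when $i>N_2$; set $N=\max(N_1,N_2)$. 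For $i>N$ the curvature bound forces all edge directions of the sub-polygon $L_{i[s_{k-1},s_k]}$ to lie within $\varepsilon_0$ of a single direction $\vec u_k$ (along a PL arc the edge direction turns by at most the total curvature, here $<\varepsilon_0$). Since the net displacement $L_i(s_k)-L_i(s_{k-1})$ is within $2\max_t|L_i(t)-\mathcal C(t)|$ of $\mathcal C(s_k)-\mathcal C(s_{k-1})$, whose direction is within $\varepsilon_0$ of $\mathcal C'(s_{k-1})$, and is itself a positive combination of those edge directions, $\vec u_k$ is within $O(\varepsilon_0)$ of $\mathcal C'(s_{k-1})$; hence every edge of $L_{i[s_{k-1},s_k]}$ makes angle well below $\pi/2$ with $\mathcal C'(t)$ for all $t\in[s_{k-1},s_k]$.

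Third, I would deduce injectivity. Fix $t\in[s_{k-1},s_k]$; by the preceding paragraph $s\mapsto\langle L_i(s)-\mathcal C(t),\mathcal C'(t)\rangle$ has strictly positive one-sided derivatives on $[s_{k-1},s_k]$, hence is strictly increasing there, so $L_{i[s_{k-1},s_k]}$ meets $\Pi(t)$ (and therefore $D(t)$) at most once — and, from the sign of this function at $s_{k-1}$ and $s_k$, exactly once when $t$ lies in an interval $[a_k,b_k]\approx[s_{k-1},s_k]$ and not at all otherwise. Because the junction value $L_i(s_k)$ lies in a single fibre, $b_k=a_{k+1}$, so the $[a_k,b_k]$ tile the parameter circle exactly and every fibre is met "properly" by exactly one piece, at one point. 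The only remaining threat to injectivity is a fibre $D(t)$ over $\mathcal C_{[s_{j-1},s_j]}$ also meeting a non-adjacent piece $L_{i[s_{k-1},s_k]}$; but that piece stays within $\max_t|L_i(t)-\mathcal C(t)|$ of $\mathcal C_{[s_{k-1},s_k]}$, which keeps a positive distance from $D(t)$ that is bounded below uniformly once $t$ is held away from the two junctions $s_{j-1},s_j$, and the behavior near those junctions is exactly the bookkeeping just described. Thus for $i>N$ the map $t\mapsto\pi(L_i(t))$ is injective, and paragraph one completes the proof.

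The step I expect to be the main obstacle is this globalization. The local statement — a sub-polygon of small total curvature threads monotonically through the normal planes, hence is a graph, so unknotted and injective under $\pi$ — is clean, but it must be stitched across the junctions $s_k$ (using that the one-sided bounds on consecutive pieces agree at the shared vertex) and reconciled with the danger that a piece of $L_i$ drifts into a normal disk lying over a remote part of $\mathcal C$. Excluding the latter is precisely where pointwise convergence must be strong enough to confine $L_i$ to $T$ and where the partition must be chosen fine relative to the reach $\rho$; and relating the edge directions of the inscribed polygon to the tangent field of $\mathcal C$ — an individual edge may be long, and is pinned down only through the total-curvature budget of the piece containing it — is exactly where \emph{uniform} convergence in total curvature, rather than plain convergence in total curvature, is genuinely used. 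Carrying the estimates through yields an explicit $N$ in terms of $\rho$ and the chosen partition, which is what makes the theorem algorithmically usable.
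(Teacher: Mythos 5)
Your proposal is correct in outline, but it takes a genuinely different route from the paper. The paper's proof is short and partly delegated: it takes the tubular surface of radius $r$ from \cite{Maekawa_Patrikalakis_Sakkalis_Yu1998}, partitions $\mathcal{C}$ by the vertices of $L_i$ itself (which lie on $\mathcal{C}$ because $L_i$ is inscribed), uses uniform convergence in total curvature to make each sub-arc $\mathcal{A}_j$ of $\mathcal{C}$ between consecutive vertices have total curvature below $\pi/2$, invokes Lemma~\ref{lem:npwint} (a Fenchel-theorem argument) to conclude that these sub-arcs are separated by the normal planes at the vertices, and then cites the criterion of \cite{Maekawa_Patrikalakis_Sakkalis_Yu1998} --- containment in the tube plus normal-plane separation --- to obtain the isotopy. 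You instead fix a partition of the parameter domain independent of $i$, bound the total curvature of the sub-polygons of $L_i$ (not the sub-arcs of $\mathcal{C}$), deduce from that bound plus pointwise convergence that every edge direction stays close to the tangent field, prove injectivity of the normal projection directly by a monotonicity argument, and construct the fiberwise straightening isotopy explicitly rather than citing it. Two consequences are worth noting. First, your argument nowhere uses that the vertices of $L_i$ lie on $\mathcal{C}$, so you are effectively proving the PL case of the stronger Theorem~\ref{thm:distcurv}; the paper reaches that statement only later, via the median-push machinery of Lemmas~\ref{lem:cds}--\ref{lem:tpbk}, and your fixed-partition-plus-sub-tube setup closely parallels that later section rather than the proof of Theorem~\ref{thm:insc}. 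Second, the trade-off is the expected one: the paper's proof is brief because the hard geometric step is outsourced to a cited criterion, while yours is self-contained, makes explicit exactly where \emph{uniform} convergence in total curvature (rather than plain convergence) is consumed, and yields quantitative constants; the price is the junction bookkeeping and the remote-fiber exclusion, which you correctly identify as the delicate points and which do go through once the partition is chosen fine relative to the tube radius and $\max_t|L_i(t)-\mathcal{C}(t)|$ is small relative to both.
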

\begin{proof}
For $\mathcal{C}$, there is a non-self-intersecting tubular surface\footnote{We use the terminology of \textit{tubular surface} as generalization from the recent usage \cite{Maekawa_Patrikalakis_Sakkalis_Yu1998} regarding the classically defined \textit{pipe surface} \cite{Monge}. } of radius $r$ \cite{Maekawa_Patrikalakis_Sakkalis_Yu1998}. 

Pointwise convergence and the uniform convergence in total curvature imply that there exists a positive integer $N$ such that for an arbitrary $i>N$: 
\begin{enumerate}
\item The PL curve $L_i$ lies inside of the tubular surface of radius $r$; and 
\item Denote the set of vertices of $L_i$ as $\{v_j\}_{j=0}^n$. Suppose the sub-curve of $\mathcal{C}$ between two arbitrary consecutive vertices $v_j$ and $v_{j+1}$ as $\mathcal{A}_j$, for $j=0,\ldots,n-1$. Then since the total curvature of $\overrightarrow{v_jv_{j+1}}$ is $0$, the total curvature of $\mathcal{A}_j$ can be less than $\frac{\pi}{2}$.
\end{enumerate}

Lemma~\ref{lem:npwint} implies that all such sub-curves $\mathcal{A}_j$ are separated by normal planes except the connection points. The facts about fitting inside a tubular surface and separation by normal planes provide a sufficient condition \cite{Maekawa_Patrikalakis_Sakkalis_Yu1998} for $L_i$ being ambient isotopic to $\mathcal{C}$.
\end{proof}

\begin{remark}\label{rmk:pipe}
The paper \cite{Maekawa_Patrikalakis_Sakkalis_Yu1998} provides the computation of the radius $r$ only for rational spline curves. However, the method of computing $r$ is similar for other compact, regular, $C^2$, and simple curves, that is, setting
$$r < \min \{\frac{1}{\kappa_{max}}, \frac{d_{min}}{2}, r_{end}\},$$
where $\kappa_{max}$ is the maximum of the curvatures, $d_{min}$ is the minimum separation distance, and $r_{end}$ is the maximal radius around the end points that does not yield self-intersections.
\end{remark}


\section{Pointwise Convergence}
\label{sec:cdo}
Pointwise convergence provides a lower bound of the total curvatures of approximants (Theorem~\ref{tim:bigcurva}). The proof relies upon showing this for PL curves first (Lemma~\ref{lem:egre}). The technique used here is the well known ``2D push" \cite{Bing1983}. It is sufficient here to consider a specialized type of push, designated, below, as a {\em median push}. 

\begin{definition}\label{def:mp}
Assume that triangle $\triangle{ABC}$ has non-collinear vertices $A,B$ and $C$. Push a vertex, say $B$, along the corresponding median of the triangle to the midpoint of the side $AC$. We call this specific kind of ``2D push", a median push. 
\end{definition}

\begin{lemma}\label{lem:egre}
Let $\{L_i\}_{i=1}^{\infty}$ be a sequence of PL curves parametrized on $[0,1]$ and $L$ be a PL curve parametrized on $[0,1]$. If $\{L_i\}_{i=1}^{\infty}$ pointwise converges to $L$, then for  $\forall \epsilon>0$, there exists an integer $N$ such that $T_{\kappa}(L_i)> T_{\kappa}(L)-\epsilon$ for all $i \geq N$.
\end{lemma}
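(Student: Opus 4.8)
The plan is to reduce the general statement to a stability-type argument about how the total curvature of a PL curve behaves under small perturbations of its vertices, and then to handle the subtlety that $L_i$ may have \emph{more} vertices than $L$. Since $\{L_i\}$ converges to $L$ only pointwise, each $L_i$ is a PL curve with some vertex set; a priori we know nothing about how many vertices $L_i$ has or where they sit. The key observation is that the parametrization is fixed on $[0,1]$, so if $L$ has vertices at parameters $0 = s_0 < s_1 < \cdots < s_k = 1$, then the points $L_i(s_0), L_i(s_1), \ldots, L_i(s_k)$ converge respectively to the vertices of $L$. The PL curve through these sampled points of $L_i$ is an inscribed PL curve of $L_i$, hence by the standard fact that inscribing never increases total curvature (a straightening of a polygonal path decreases the sum of exterior angles — this is implicit in Milnor's treatment and is the PL case of the monotonicity used throughout the excerpt), we have $T_{\kappa}(L_i) \geq T_{\kappa}(\widetilde{L}_i)$, where $\widetilde{L}_i$ is the PL curve with exactly $k$ segments joining $L_i(s_0), \ldots, L_i(s_k)$.

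Next I would show $T_{\kappa}(\widetilde{L}_i) \to T_{\kappa}(L)$ as $i \to \infty$. This is now a finite, elementary computation: $\widetilde{L}_i$ and $L$ each have the same fixed number $k$ of segments, the $j$-th segment of $\widetilde{L}_i$ is $\overrightarrow{L_i(s_{j-1})L_i(s_j)}$, and by pointwise convergence each such segment converges (as a vector, after noting the endpoints are eventually distinct since $L$ is a genuine PL curve with non-degenerate segments) to the corresponding segment $\overrightarrow{L(s_{j-1})L(s_j)}$ of $L$. The exterior angle $\eta(\vec v_1, \vec v_2) = \arccos\!\bigl(\tfrac{\vec v_1 \cdot \vec v_2}{|\vec v_1||\vec v_2|}\bigr)$ is a continuous function of the two vectors away from the zero vector, so each of the $k-1$ exterior angles of $\widetilde{L}_i$ converges to the corresponding exterior angle of $L$, and the finite sum converges: $T_{\kappa}(\widetilde{L}_i) \to T_{\kappa}(L)$.

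Combining the two facts: given $\epsilon > 0$, choose $N$ so that $T_{\kappa}(\widetilde{L}_i) > T_{\kappa}(L) - \epsilon$ for all $i \geq N$; then $T_{\kappa}(L_i) \geq T_{\kappa}(\widetilde{L}_i) > T_{\kappa}(L) - \epsilon$ for all $i \geq N$, as desired. I expect the main obstacle to be the first step — justifying cleanly that passing to an inscribed sub-polygon does not increase total curvature in the purely PL setting, and dealing with the degenerate possibility that a sampled point $L_i(s_j)$ coincides with a neighbor (so that a segment of $\widetilde{L}_i$ degenerates). The degeneracy is harmless because it can only happen for finitely many $i$ before the endpoints separate (the endpoints of each segment of $L$ are distinct), and when it does happen one simply drops the degenerate segment, which again only decreases the exterior-angle sum. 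This is presumably where the paper's remark about the ``2D push'' / ``median push'' enters: a median push is the explicit elementary move that realizes the inequality $T_{\kappa}(\text{before}) \geq T_{\kappa}(\text{after})$ vertex by vertex, and iterating median pushes is the concrete mechanism by which one compares $L_i$ to $\widetilde{L}_i$.
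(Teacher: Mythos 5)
Your argument is correct, and it reaches the conclusion by a different decomposition than the paper's. Both proofs ultimately rest on the same two ingredients: Milnor's monotonicity of total curvature under passage to an inscribed sub-polygon (equivalently, under vertex deletion realized by median pushes, \cite[Lemma 1.1, Corollary 1.2]{Milnor1950}), and the continuity of the exterior angle as a function of its defining points so long as consecutive points stay distinct. The difference is in how the comparison with $L$ is organized. You inscribe a single global polygon $\widetilde{L}_i$ in $L_i$ by sampling $L_i$ at the vertex parameters $s_0,\ldots,s_k$ of $L$, obtain $T_{\kappa}(L_i)\geq T_{\kappa}(\widetilde{L}_i)$ in one stroke, and then let all the exterior angles converge simultaneously to those of $L$. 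The paper instead localizes: around each vertex $v=L(t_v)$ it chooses a small ball $B_v$ meeting only the two segments of $L$ adjacent to $v$, forms the triangle $\triangle{u_iv_iw_i}$ from $u_i=L_i(t_u)$, $v_i=L_i(t_v)$, $w_i=L_i(t_w)$, uses median pushes to get $T_{\kappa}(L_i\cap B_v)\geq \eta(v_i)$, lets $\eta(v_i)\to\eta(v)$, and then sums over the vertices via the superadditivity $T_{\kappa}(L_i)\geq\sum_{v} T_{\kappa}(L_i\cap B_v)$ over the disjoint balls. Your global version is slightly cleaner, since it avoids choosing ball radii and the superadditivity step; the paper's local version isolates each vertex's contribution, making the role of each exterior angle of $L$ explicit. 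The two points you flag as potential obstacles are genuine but handled correctly: the monotonicity under inscribing is exactly the cited Milnor result (insert the sample points $L_i(s_j)$ as angle-zero vertices of $L_i$, then delete the remaining vertices one at a time, noting that the $L_i(s_j)$ occur along $L_i$ in parameter order and that $s_0=0$, $s_k=1$ preserve the endpoints), and degenerate segments of $\widetilde{L}_i$ can occur for only finitely many $i$ because consecutive vertices of $L$ are distinct.
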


\begin{proof}
For an arbitrary vertex $v$ of $L$, suppose $v=L(t_v)$ for some $t_v \in [0,1]$. Let $B_v$ be a closed ball centered at $v$. Since $L$ is a compact PL curve, we can choose the radius of $B_v$ small enough such that: 

\begin{enumerate}
\item the ball $B_v$ contains only the single vertex $v$ of $L$; and  
\item it intersects only the two line segments of $L$ which are connected at $v$. Denote these intersections as $u=L(t_u)$ and $w=L(t_w)$ for some $t_u, t_w \in [0,1]$. Then $u, w$ and $v$ together form a triangle $\triangle{uvw}$.
\end{enumerate}

Let $u_i=L_i(t_u)$, $v_i=L_i(t_v)$ and $w_i=L_i(t_w)$. Denote the exterior angle of the triangle $\triangle{uvw}$ at $v$ as $\eta(v)$, and correspondingly the exterior angle of $\triangle{u_iv_iw_i}$ at $v_i$ as $\eta(v_i)$. (Note that $\eta(v)$ is not necessarily equal to the exterior angle of $L$ at $v$. Similarly for $\eta(v_i)$.) By the pointwise convergence we have that the triangle $\triangle{u_iv_iw_i}$ converges to $\triangle{uvw}$. So $\eta(v_i)$ converges to $\eta(v)$. That is, for $\forall \epsilon'>0$ there exists an $N$ such that $\eta(v_i)> \eta(v)-\epsilon'$ for all $i \geq N$.

Consider the PL sub-curve of $L_i$ lying in $B_v$ and denote its total curvature as $T_{\kappa}(L_i \cap B_v)$. This PL sub-curve of $L_i$ can be reduced by median pushes to $\triangle{u_iv_iw_i}$. The existing result \cite[Lemma 1.1, Corollary 1.2]{Milnor1950} implies that $T_{\kappa}(L_i \cap B_v) \geq \eta(v_i)$. So for $i \geq N$,
\begin{align}\label{eq:tklcu}T_{\kappa}(L_i \cap B_v) > \eta(v)-\epsilon'.\end{align}

Denote the set of vertices of $L$ as $V$. Then $T_{\kappa}(L)=\sum_{v \in V} \eta(v)$. Note that $T_{\kappa}(L_i) \geq \sum_{v \in V} T_{\kappa}(L_i \cap B_v)$. So Inequality~\ref{eq:tklcu} implies that 
$$T_{\kappa}(L_i) \geq \sum_{v \in V} T_{\kappa}(L_i \cap B_v) > \sum_{v \in V} \eta(v)-\epsilon'n = T_{\kappa}(L)-\epsilon' n$$ 
where $n$ is the number of vertices of $L$. Let $\epsilon'=\frac{\epsilon}{n}$, then we complete the prove. 
\end{proof}

\begin{theorem} \label{tim:bigcurva}
If  $\{C_i\}_{1}^{\infty}$ pointwise converges to $\mathcal{C}$, then for $\forall \epsilon>0$, there exists an integer $N$ such that $T_{\kappa}(C_i)> T_{\kappa}(\mathcal{C})-\epsilon$ for all $i \geq N$.
\end{theorem}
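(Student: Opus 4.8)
The idea is to reduce the statement to its piecewise-linear analogue, Lemma~\ref{lem:egre}, by squeezing $\mathcal{C}$ and the $C_i$ between matching inscribed PL curves. Fix $\epsilon>0$. By Corollary~\ref{coro:app} — or directly, since the total curvature of the $C^2$ curve $\mathcal{C}$ is the supremum of the total curvatures of its inscribed PL curves — choose a single inscribed PL curve $L$ of $\mathcal{C}$ with $T_{\kappa}(L)>T_{\kappa}(\mathcal{C})-\frac{\epsilon}{2}$. Let the vertices of $L$ be $\mathcal{C}(s_1),\dots,\mathcal{C}(s_m)$ with $0=s_1<\cdots<s_m=1$, parametrized so that $L(s_k)=\mathcal{C}(s_k)$ with linear interpolation in between, as in Definition~\ref{def:ins}.

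Next I would transport these parameter values to the approximants: let $L_i$ be the PL curve with vertices $C_i(s_1),\dots,C_i(s_m)$, that is, $L_i(s_k)=C_i(s_k)$ with linear interpolation, using the same parametrization scheme as $L$. Since its vertices lie on $C_i$, each $L_i$ is an inscribed PL curve of $C_i$ (Definition~\ref{def:ins}). Moreover $\{L_i\}_1^\infty$ converges pointwise to $L$: for $t\in[s_k,s_{k+1}]$ the points $L(t)$ and $L_i(t)$ are convex combinations with the same coefficients of the pairs $\{\mathcal{C}(s_k),\mathcal{C}(s_{k+1})\}$ and $\{C_i(s_k),C_i(s_{k+1})\}$ respectively, so $|L_i(t)-L(t)|\le |C_i(s_k)-\mathcal{C}(s_k)|+|C_i(s_{k+1})-\mathcal{C}(s_{k+1})| \to 0$ by the pointwise (indeed uniform) convergence of $\{C_i\}$ to $\mathcal{C}$. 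Applying Lemma~\ref{lem:egre} with $\frac{\epsilon}{2}$ in place of $\epsilon$ then produces an $N$ with $T_{\kappa}(L_i)>T_{\kappa}(L)-\frac{\epsilon}{2}>T_{\kappa}(\mathcal{C})-\epsilon$ for all $i\ge N$.

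The remaining ingredient is the inequality $T_{\kappa}(C_i)\ge T_{\kappa}(L_i)$, i.e.\ that the total curvature of a PL curve inscribed in a piecewise $C^2$ curve never exceeds the total curvature of that curve. For this I would invoke the extension to piecewise $C^2$ curves of Milnor's monotonicity result: inserting vertices into an inscribed PL curve never decreases its total curvature \cite[Lemma 1.1, Corollary 1.2]{Milnor1950}, and once the inscribed PL curve is refined to contain all the breakpoints of $C_i$, the portion lying on each $C^2$ piece has total curvature at most that of the piece, while at each breakpoint the exterior angle is controlled exactly as in step (1) of the proof of Lemma~\ref{lem:app}; letting the refinement become arbitrarily fine, its total curvature increases to $T_{\kappa}(C_i)$ by Definition~\ref{def:ttc}. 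Hence every inscribed PL curve of $C_i$, in particular $L_i$, has total curvature at most $T_{\kappa}(C_i)$. Combining, $T_{\kappa}(C_i)\ge T_{\kappa}(L_i)>T_{\kappa}(\mathcal{C})-\epsilon$ for all $i\ge N$, as required.

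I expect the last step — establishing $T_{\kappa}(C_i)\ge T_{\kappa}(L_i)$ for piecewise $C^2$ $C_i$ — to be the only delicate point, since it requires tracking the interaction between the corners of $C_i$ and the vertices of the inscribed PL curve; the $C^2$ version is classical (Milnor), and the refinement argument carries the corners along just as in Lemma~\ref{lem:app}. Everything else (choosing $L$, copying its parameter values onto the $C_i$, and the uniform convergence $L_i\to L$) is routine, and the genuine analytic content has already been isolated in Lemma~\ref{lem:egre}.
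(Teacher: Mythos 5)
Your proof is correct and follows essentially the same route as the paper's: reduce to the PL case via inscribed curves and then invoke Lemma~\ref{lem:egre}. Your version is in fact more careful than the paper's three-line sketch, since you pin down the fixed PL limit curve $L$ that Lemma~\ref{lem:egre} requires (by transporting the vertex parameters of $L$ onto each $C_i$) and supply the one-sided monotonicity $T_{\kappa}(L_i)\le T_{\kappa}(C_i)$ for inscribed polygons of piecewise $C^2$ curves, where the paper simply appeals to Lemma~\ref{lem:app} for two-sided convergence in total curvature.
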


\begin{proof}
By Lemma~\ref{lem:app}, we can use inscribed PL curves to approximate $\{C_i\}_{1}^{\infty}$ and $\mathcal{C}$, such that the approximations converge pointwise and in total curvature. Then apply the Lemma~\ref{lem:egre} to these inscribed PL curves. Since these inscribed PL curves converge pointwise and in total curvature to $\{C_i\}_{1}^{\infty}$ and $\mathcal{C}$ respectively, the desired conclusion follows.
\end{proof}


\section{Uniform Convergence in Total Curvature}
Convergence in total curvature is weaker than uniform convergence in total curvature. But pointwise convergence and convergence in total curvature together imply the uniform convergence, which is shown by Lemma~\ref{lem:loc-cov} below. 

\begin{lemma}\label{lem:loc-cov}
If $\{C_i\}_{1}^{\infty}$ converges to a $C^2$ curve $\mathcal{C}$ pointwise and in total curvature, then $\{C_i\}_{1}^{\infty}$ uniformly converges to $\mathcal{C}$ in total curvature.
\end{lemma}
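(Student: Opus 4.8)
The plan is to argue by contradiction, exploiting the fact that total curvature is \emph{additive} over a subdivision of the parameter interval (Definition~\ref{def:ttc}), together with the one-sided lower-bound result of Theorem~\ref{tim:bigcurva}. Fix an interval $[t_1,t_2]\subset[0,1]$ and set $[t_2,\ldots]$ aside for a moment: write $J_1=[0,t_1]$, $J_2=[t_1,t_2]$, $J_3=[t_2,1]$ (dropping the degenerate cases $t_1=0$ or $t_2=1$, which only shorten the argument). Since the restrictions $\{C_{i[t_1,t_2]}\}$ pointwise converge to $\mathcal{C}_{[t_1,t_2]}$, and likewise on $J_1$ and $J_3$, Theorem~\ref{tim:bigcurva} applied on each of the three subintervals gives, for every $\epsilon>0$, an index beyond which
\begin{align*}
T_{\kappa}(C_{i[0,t_1]}) &> T_{\kappa}(\mathcal{C}_{[0,t_1]})-\epsilon,\\
T_{\kappa}(C_{i[t_1,t_2]}) &> T_{\kappa}(\mathcal{C}_{[t_1,t_2]})-\epsilon,\\
T_{\kappa}(C_{i[t_2,1]}) &> T_{\kappa}(\mathcal{C}_{[t_2,1]})-\epsilon.
\end{align*}
(One subtlety: Theorem~\ref{tim:bigcurva} is stated for the fixed curve $\mathcal{C}$, which is globally $C^2$; applied to a subarc it is still a compact regular $C^2$ arc, so the theorem — whose proof goes through Lemmas~\ref{lem:app} and~\ref{lem:egre}, neither of which needs closedness — applies verbatim. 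The endpoint exterior angles introduced by restriction contribute nonnegatively and only help the inequality, so they can be ignored.)

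Next I would combine these three one-sided bounds with the \emph{global} hypothesis of convergence in total curvature. By additivity,
\[
T_{\kappa}(C_i) \;=\; T_{\kappa}(C_{i[0,t_1]})+T_{\kappa}(C_{i[t_1,t_2]})+T_{\kappa}(C_{i[t_2,1]})\;+\;(\text{exterior angles of }C_i\text{ at }t_1,t_2),
\]
and the bracketed term is $\geq 0$. Rearranging,
\[
T_{\kappa}(C_{i[t_1,t_2]}) \;\leq\; T_{\kappa}(C_i) \;-\; T_{\kappa}(C_{i[0,t_1]}) \;-\; T_{\kappa}(C_{i[t_2,1]}).
\]
Now feed in: the left-over two terms are each $> T_{\kappa}(\mathcal{C}_{[\cdot]})-\epsilon$ by the display above, and $T_{\kappa}(C_i) < T_{\kappa}(\mathcal{C})+\epsilon$ for $i$ large by convergence in total curvature. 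Since $\mathcal{C}$ is $C^2$ it has no corners, so $T_{\kappa}(\mathcal{C}) = T_{\kappa}(\mathcal{C}_{[0,t_1]})+T_{\kappa}(\mathcal{C}_{[t_1,t_2]})+T_{\kappa}(\mathcal{C}_{[t_2,1]})$ exactly. Substituting yields
\[
T_{\kappa}(C_{i[t_1,t_2]}) \;<\; T_{\kappa}(\mathcal{C}_{[t_1,t_2]}) + 3\epsilon
\]
for $i$ beyond a suitable $N$, which together with the lower bound $T_{\kappa}(C_{i[t_1,t_2]}) > T_{\kappa}(\mathcal{C}_{[t_1,t_2]})-\epsilon$ gives $|T_{\kappa}(C_{i[t_1,t_2]})-T_{\kappa}(\mathcal{C}_{[t_1,t_2]})| < 3\epsilon$. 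Rescaling $\epsilon$ finishes the proof.

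I expect the main obstacle to be purely bookkeeping rather than conceptual: making sure the additivity formula is applied correctly when $C_i$ is only piecewise $C^2$ (so that the corner contributions at $t_1,t_2$, and at the finitely many break points of $C_i$ internal to each subinterval, are all accounted for with the right sign), and confirming that Theorem~\ref{tim:bigcurva} may legitimately be invoked on each of the three subarcs of $\mathcal{C}$. The sign of every corner term is favorable — they are nonnegative and appear on the side that makes the chain of inequalities work — so no delicate estimate is needed; the care is entirely in writing the decomposition cleanly and handling the boundary cases $t_1=0$, $t_2=1$. If one wanted to avoid even invoking Theorem~\ref{tim:bigcurva} three times, a single application on $J_2$ suffices for the lower bound on $T_{\kappa}(C_{i[t_1,t_2]})$; the upper bound still requires the lower bounds on $J_1$ and $J_3$, so all three applications seem genuinely needed.
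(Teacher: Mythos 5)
Your proposal is correct and is essentially the paper's own argument: the lower bound on $T_{\kappa}(C_{i[t_1,t_2]})$ comes from Theorem~\ref{tim:bigcurva} applied to the subarc, and the upper bound comes from applying Theorem~\ref{tim:bigcurva} to the complementary part of the curve, invoking subadditivity $T_{\kappa}(C_i)\geq\sum_j T_{\kappa}(C_{i,J_j})$, and playing this against the global convergence in total curvature together with exact additivity of $T_{\kappa}$ for the $C^2$ curve $\mathcal{C}$. The only differences are cosmetic: the paper phrases it as a contradiction and treats the complement $[t_1,t_2]^c$ as a single piece, whereas you split it into $[0,t_1]$ and $[t_2,1]$ and chain the inequalities directly.
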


\begin{proof}
Assume not, then there exist a subset $[t_1,t_2] \subset [0,1]$ and a $\tau>0$ such that for any integer $N$, there is a $i \geq N$ such that  $|T_{\kappa}(C_{i[t_1,t_2]})-T_{\kappa}(\mathcal{C}_{[t_1,t_2]})| > \tau$, that is $T_{\kappa}(C_{i[t_1,t_2]}) >T_{\kappa}(\mathcal{C}_{[t_1,t_2]})+ \tau$ or $T_{\kappa}(C_{i[t_1,t_2]}) <T_{\kappa}(\mathcal{C}_{[t_1,t_2]})- \tau$. The latter is precluded by Theorem~\ref{tim:bigcurva}. Therefore
\begin{align}\label{eq:kt} T_{\kappa}(C_{i[t_1,t_2]}) >T_{\kappa}(\mathcal{C}_{[t_1,t_2]})+ \tau. \end{align}

Consider the sequence of the sub-curves of $\{C_i\}_{1}^{\infty}$ restricted to the complement $[t_1,t_2]^c$ of $[t_1,t_2]$, and denote it as $\{C_{i[t_1,t_2]^c}\}_{1}^{\infty}$. By theorem~\ref{tim:bigcurva}, for $\frac{\tau}{2}$, there exists an integer, say $M$ such that for all $i \geq M$,
\begin{align}\label{eq:ktc}T_{\kappa}(C_{i[t_1,t_2]^c}) > T_{\kappa}(\mathcal{C}_{[t_1,t_2]^c})-\frac{\tau}{2}.\end{align}
Note that $T_{\kappa}(C_i) \geq T_{\kappa}(C_{i[t_1,t_2]}) + T_{\kappa}(C_{i[t_1,t_2]^c})$. So Equations~\ref{eq:kt} and~\ref{eq:ktc} imply that there is a $i \geq M$ so that
$$T_{\kappa}(C_i) \geq T_{\kappa}(C_{i[t_1,t_2]}) + T_{\kappa}(C_{i[t_1,t_2]^c}) > T_{\kappa}(\mathcal{C}_{[t_1,t_2]}) +  T_{\kappa}(\mathcal{C}_{[t_1,t_2]^c}) +\frac{\tau}{2}.$$ 
Since $\mathcal{C}$ is $C^2$, $T_{\kappa}(\mathcal{C}_{[t_1,t_2]}) +  T_{\kappa}(\mathcal{C}_{[t_1,t_2]^c})=T_{\kappa}(\mathcal{C})$. Therefore we get
$$T_{\kappa}(C_i) \geq T_{\kappa}(\mathcal{C})+\frac{\tau}{2},$$
which contradicts the convergence in total curvature. 
\end{proof}

\section{Isotopic Convergence}
\label{sec:cdtc}
For a $C^2$ compact curve $\mathcal{C}$, we shall, without loss of generality (Theorem~\ref{thm:insc}), consider a sequence $\{L_i\}_1^{\infty}$ of PL curves (instead of piecewise $C^2$ curves) as its approximation. We shall divide $\mathcal{C}$ into finitely many sub-curves, and reduce the corresponding sub-curves of $L_i$ to line segments, by median pushes, so as to preserve isotopic equivalence. The line segments generated by the pushes form a polyline. We shall then prove the polyline is ambient isotopic to $\mathcal{C}$. 

To get to the major theorem, we need to first establish some preliminary topological results. We use $CH(\cdot)$ to denote the convex hull of a set. 

\begin{lemma}\label{lem:tac}
Let $X$ and $Y$ be compact subspaces of an Euclidean space $\mathbb{R}^d$. If $X \cap Y=\emptyset$, then $Y$ can be subdivided into finitely many subsets, denoted as $Y_1, \ldots Y_i, \ldots Y_m$ for some $m > 0$, such that $CH(Y_i) \cap X =\emptyset$ for each $i$. 
\end{lemma}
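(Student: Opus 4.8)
The plan is to exploit compactness to obtain a uniform positive separation distance and then chop $Y$ into pieces of small enough diameter. First I would set $\delta = \inf\{d(x,y) : x \in X, y \in Y\}$; since $X$ and $Y$ are compact and disjoint, $\delta > 0$ (the distance function $d(\cdot,\cdot)$ is continuous on the compact set $X \times Y$ and attains its minimum, which cannot be $0$). Next I would cover $Y$ by finitely many open balls of radius $\delta/3$ centered at points of $Y$; compactness of $Y$ yields a finite subcover $B_1, \ldots, B_m$. Setting $Y_i = Y \cap B_i$ (and if one wants a genuine partition rather than a cover, $Y_i = Y \cap B_i \setminus \bigcup_{k<i} B_k$) gives finitely many subsets of diameter at most $2\delta/3$ whose union is $Y$.

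The key step is then the convex-hull estimate: each $Y_i$ lies in a ball of radius $\delta/3$, so $CH(Y_i)$, being the convex hull of a set contained in a (convex) ball of radius $\delta/3$, is also contained in that ball, hence has diameter at most $2\delta/3 < \delta$. Therefore every point of $CH(Y_i)$ is within distance less than $\delta$ of the center $y_i \in Y_i \subseteq Y$; combined with the fact that every point of $X$ is at distance at least $\delta$ from $y_i$, the triangle inequality forces $CH(Y_i) \cap X = \emptyset$. More carefully: if $z \in CH(Y_i) \cap X$, then $d(z, y_i) \ge \delta$ because $z \in X$ and $y_i \in Y$, but also $d(z,y_i) \le \operatorname{diam}(CH(Y_i)) \le 2\delta/3 < \delta$, a contradiction.

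The main (and really only) obstacle is making sure the convex hull of a small-diameter set stays small — i.e., that $CH(Y_i)$ does not ``balloon out'' beyond the ball containing $Y_i$. This is handled by the observation that a closed ball in $\mathbb{R}^d$ is convex, so it contains the convex hull of any of its subsets; thus $\operatorname{diam}(CH(Y_i)) \le \operatorname{diam}(\overline{B}(y_i,\delta/3)) = 2\delta/3$. Everything else is a routine application of compactness (to get $\delta>0$ and to extract a finite subcover) and the triangle inequality. Note the hypothesis that $X$ is compact is used only to guarantee $\delta > 0$; the decomposition itself uses compactness of $Y$.
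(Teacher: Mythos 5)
Your proposal is correct and follows essentially the same route as the paper: cover $Y$ by finitely many balls small enough to avoid $X$ (the paper chooses a ball $B_y$ with $B_y \cap X = \emptyset$ for each $y$ and extracts a finite subcover, while you use a single uniform separation $\delta$ and balls of radius $\delta/3$), set $Y_i = Y \cap B_i$, and conclude via the key observation that a ball is convex so $CH(Y_i)$ stays inside it. The only difference is cosmetic — uniform $\delta$ versus a pointwise radius — and both arguments are complete.
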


\begin{proof}
Since $X$ is compact, for $\forall y \in Y$, $\inf_{x\in X} ||x-y|| > 0$, and hence $\exists$  an open ball $B_y \subset \mathbb{R}^d$ of $y$ such that $B_y \cap X=\emptyset$. Since $Y$ is compact, among these open balls, there are finitely many, denoted by $B_{y_1},\cdots,B_{y_m}$ such that $Y \subset \bigcup_{i=1}^m B_{y_i}$. 

Let $Y_i=Y \cap B_{y_i}$ for each $i = 1, \ldots, m$ so that 
$$CH(Y_i)=CH(Y \cap B_{y_i}) \subset CH(B_{y_i})=B_{y_i}.$$
Thus, for each $i$, we have $CH(Y_i) \cap X=\emptyset$.
\end{proof}

As we mentioned before, for a simple $C^2$ curve $\mathcal{C}$, there is a non-self-intersecting tubular surface of radius $r$ (Remark~\ref{rmk:pipe}). This surface determines a tubular neighborhood of $\mathcal{C}$, denoted as $\Gamma_{\mathcal{C}}$. Denote a sub-curve of $\mathcal{C}$ as $\mathcal{C}^k$, and the corresponding tubular neighborhood of $\mathcal{C}^k$ as $\Gamma^k$. 

\begin{lemma}
The compact curve $\mathcal{C}$ can be divided into finitely many sub-curves, denoted as $\mathcal{C}^1, \ldots, \mathcal{C}^k, \ldots, \mathcal{C}^n$ for some $n>0$, such that 
\begin{itemize}
\item $T_{\kappa}(\mathcal{C}^k) < \frac{\pi}{2}$; and
\item $CH(\mathcal{C}^k) \subset \Gamma^k$.
\end{itemize}
\end{lemma}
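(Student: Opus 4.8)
The plan is to build the partition in two stages, first controlling total curvature, then refining further to control convex hulls. For the first stage, I would use the fact that $\mathcal{C}$ is $C^2$ and compact, so its total curvature function is absolutely continuous in the arc-length (or parameter) variable. Concretely, the map $[a,b] \mapsto T_{\kappa}(\mathcal{C}_{[a,b]}) = \int_a^b |\kappa(s)|\,ds$ is continuous and additive over adjacent subintervals, so by uniform continuity one can choose a finite partition $0 = s_0 < s_1 < \cdots < s_m = 1$ fine enough that $T_{\kappa}(\mathcal{C}_{[s_{j-1},s_j]}) < \frac{\pi}{2}$ for every $j$. This already yields sub-curves satisfying the first bullet; the remaining task is to refine this partition so the second bullet holds as well, while noting that refining a partition only decreases each piece's total curvature, so the first bullet is preserved under refinement.

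For the second stage, I would invoke Lemma~\ref{lem:tac} with $X = \partial \Gamma_{\mathcal{C}}$, the boundary tubular surface, which is compact and disjoint from $\mathcal{C}$ (since $\mathcal{C}$ lies in the open tubular neighborhood). Actually, a cleaner route is to apply the covering idea of Lemma~\ref{lem:tac} directly to $\mathcal{C}$: for each point $\mathcal{C}(t)$ there is an open ball around it whose convex hull (itself a ball) lies inside $\Gamma_{\mathcal{C}}$, and moreover, since $\mathcal{C}$ is $C^2$ and regular, a short enough arc around $\mathcal{C}(t)$ has its convex hull contained in such a ball and hence in $\Gamma^k$ for the corresponding sub-curve. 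By compactness of $[0,1]$ and a Lebesgue-number argument applied to the pullback cover of the parameter interval, there is a $\delta > 0$ such that every sub-curve corresponding to a parameter interval of length less than $\delta$ has its convex hull inside its own tubular neighborhood.

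Combining the two stages: take a common refinement of the curvature-controlled partition from stage one and any partition into pieces of parameter-length less than $\delta$ from stage two. Since refinement only shrinks sub-curves, both the total curvature bound $T_{\kappa}(\mathcal{C}^k) < \frac{\pi}{2}$ and the convex-hull containment $CH(\mathcal{C}^k) \subset \Gamma^k$ hold simultaneously for every piece, and the refinement is still finite. This produces the desired sub-curves $\mathcal{C}^1, \ldots, \mathcal{C}^n$.

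The main obstacle I anticipate is the precise justification that short sub-curves have convex hulls inside the tubular neighborhood, since the tubular neighborhood $\Gamma^k$ of a sub-curve $\mathcal{C}^k$ is thinner than $\Gamma_{\mathcal{C}}$ near the endpoints of $\mathcal{C}^k$ (the radius $r$ is shared, but the "end caps" matter, cf. Remark~\ref{rmk:pipe}). One must be careful that the convex hull of the short arc does not poke out past the flat disk caps at the ends of $\Gamma^k$. This is handled by choosing the ball around each $\mathcal{C}(t)$ small enough relative to $r$ and to the modulus of continuity of $\mathcal{C}'$, so that the arc stays within, say, a ball of radius $r/2$ about its midpoint, which is comfortably inside $\Gamma^k$ regardless of how the caps are placed; the uniform choice of such a radius is exactly what compactness delivers.
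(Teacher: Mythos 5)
Your two-stage structure (first cut for total curvature, then refine for convex hulls) matches the paper's proof in outline, and the covering argument correctly yields containment of each convex hull in the full tubular neighborhood $\Gamma_{\mathcal{C}}$. The genuine gap is in the very last step, the one you yourself flag as the main obstacle: the claim that a ball of radius $r/2$ about the midpoint of a short arc is ``comfortably inside $\Gamma^k$ regardless of how the caps are placed'' is false. $\Gamma^k$ is the tubular neighborhood of the sub-curve $\mathcal{C}^k$ alone; in the tangential direction it terminates at the normal disks over the endpoints of $\mathcal{C}^k$, so for a sub-arc of diameter much smaller than $r$ (which is exactly what your Lebesgue-number refinement produces) a ball of radius $r/2$ about the midpoint protrudes far past both end caps. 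Shrinking the pieces makes this worse, not better, so no uniform choice of radius rescues the argument. Relatedly, your assertion that the containment $CH(\mathcal{C}^k)\subset\Gamma^k$ survives refinement ``since refinement only shrinks sub-curves'' is not automatic: $\Gamma^k$ shrinks along with $\mathcal{C}^k$, so only containment in the fixed tube $\Gamma_{\mathcal{C}}$ is monotone under refinement.

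The missing ingredient --- and what the paper uses --- is the normal-plane slab. Since each piece already satisfies $T_{\kappa}(\mathcal{C}^k)<\frac{\pi}{2}$ from your first stage, Lemma~\ref{lem:npwint} (or the Fenchel argument in its proof) shows that $\mathcal{C}^k$ lies on one side of the normal plane at each of its endpoints, hence inside the closed convex region $H^k$ bounded by those two normal planes; therefore $CH(\mathcal{C}^k)\subset H^k$ as well. Combining this with $CH(\mathcal{C}^k)\subset\Gamma_{\mathcal{C}}$ from your covering argument gives $CH(\mathcal{C}^k)\subset\Gamma_{\mathcal{C}}\cap H^k=\Gamma^k$, and this derivation is stable under refinement because both of its hypotheses (total curvature below $\frac{\pi}{2}$ and convex hull inside the full tube) are. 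You have all the needed ingredients on the table; the proof just needs this step in place of the false claim about the ball.
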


\begin{proof}
By Lemma~\ref{lem:tac}, $\mathcal{C}$ can be partitioned into finitely many non-empty sub-curves, each which is disjoint from $S_r(\mathcal{C})$.  Since $\mathcal{C}$, is also of finite total curvature, we can denote these sub-curves as $\mathcal{C}^1, \ldots, \mathcal{C}^k, \ldots, \mathcal{C}^n$ for some $n>0$, such that for each $k=1,\cdots,n$, $T_{\kappa}(\mathcal{C}^k) < \frac{\pi}{2}$ and $CH(\mathcal{C}^k) \cap S_r(\mathcal{C}) = \emptyset$. 

Consider $\mathcal{C}^k$ for an arbitrary $k = 1, \dots, n$ and denote the distinct normal planes at the endpoints of $\mathcal{C}^k$ by $\Pi_1, \Pi_2$, respectively.  Denote the closed convex subspace of $\mathbb{R}^3$ that contains $\mathcal{C}^k$ and is bounded by $\Pi_1$ and $\Pi_2$ as $H^k$. It is clear that $CH(\mathcal{C}^k) \subset H^k$, but since $CH(\mathcal{C}^k) \cap S_r(\mathcal{C}) = \emptyset$, we have that $CH(\mathcal{C}^k) \subset \Gamma^k$.  
\end{proof}

For $k = 1, \ldots, n$, let $[t_{k-1}, t_k]$ be the subinterval whose image is $\mathcal{C}^k$, with corresponding $\Gamma^k$. Let $\epsilon$ be real valued such that 
$$0< \epsilon < \min_{k\in \{0,\ldots,n\}} \frac{|t_k-t_{k-1}|}{2}.$$ 
We extend\footnote{If $\mathcal{C}$ is open and $t_{k-1}=0$ or $t_k=1$, consider $[0, t_k+\epsilon]$ or $[t_{k-1}-\epsilon, 1]$.} $[t_{k-1}, t_k]$ to $[t_{k-1}-\epsilon, t_k+\epsilon]$, and denote the tubular neighborhood corresponding to the extended subinterval as $\Gamma^k_{\epsilon}$, then $\Gamma^k_{\epsilon}$ only intersects $\Gamma^{k+1}_{\epsilon}$ and $\Gamma^{k-1}_{\epsilon}$ for each $k$. 

For a sequence of PL curves $\{L_i\}_{1}^{\infty}$ converging to $\mathcal{C}$ pointwise and in total curvature, denote the sub-curve of $L_i$ corresponding (with the same parameters) to $\mathcal{C}^k$ as $L_i^k$. Denote the end points of $L_i^k$ by $u^{k-1}_i$ and $u^k_i$, with the corresponding end points of $\mathcal{C}^k$ by $v^{k-1}$ and $v^k$. 

\begin{lemma}\label{lem:cds}
A large positive integer $N$ can be found such that whenever $i \geq N$, for each $k$, we have 
\begin{enumerate}
\item $T_{\kappa}(L_i^k) < \frac{\pi}{2}$;
\item $CH(L_i^k) \subset \Gamma^k_{\epsilon}$; and
\item $|u^k_i-v^k|<\frac{r}{2}$ and $\mu(\overline{u^{k-1}_iu^k_i}, \mathcal{C}^k)<\frac{r}{2}$, where $\mu(\cdot)$ refers to the Hausdorff distance. 
\end{enumerate}
\end{lemma}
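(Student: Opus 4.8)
The plan is to obtain the integer $N$ by applying the three convergence hypotheses — pointwise convergence, convergence in total curvature (hence, by Lemma~\ref{lem:loc-cov}, uniform convergence in total curvature), and the finiteness of the decomposition $\mathcal{C}^1,\ldots,\mathcal{C}^n$ — one conclusion at a time, and then taking $N$ to be the maximum of the finitely many resulting thresholds.

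For conclusion (1), fix $k$. Since $\mathcal{C}$ is $C^2$, the decomposition lemma gives $T_{\kappa}(\mathcal{C}^k)<\frac{\pi}{2}$, so choose $\delta_k>0$ with $T_{\kappa}(\mathcal{C}^k)+\delta_k<\frac{\pi}{2}$. By uniform convergence in total curvature applied to the subinterval $[t_{k-1},t_k]$, there is $N_k^{(1)}$ so that $|T_{\kappa}(L_i^k)-T_{\kappa}(\mathcal{C}^k)|<\delta_k$ for $i\geq N_k^{(1)}$; in particular $T_{\kappa}(L_i^k)<\frac{\pi}{2}$. For conclusion (3), pointwise convergence (equivalently, convergence in parametric measure distance, Definition~\ref{def:indist}) gives, for $i$ large, $\max_{t\in[0,1]}|L_i(t)-\mathcal{C}(t)|<\frac{r}{2}$; evaluating at $t=t_k$ yields $|u_i^k-v^k|<\frac{r}{2}$, and since every point of $\overline{u_i^{k-1}u_i^k}=L_i^k$'s chord lies within the same uniform distance of the corresponding point of $\mathcal{C}^k$ — actually one must be slightly careful, since the chord need not be $L_i^k$ itself, but each vertex of $L_i^k$ is within $\frac{r}{2}$ of a point of $\mathcal{C}^k$ and conversely, so $\mu(\overline{u_i^{k-1}u_i^k},\mathcal{C}^k)<\frac{r}{2}$ follows from the Hausdorff-distance remark. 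Call the resulting threshold $N_k^{(3)}$.

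Conclusion (2) is the one requiring the most care, and I expect it to be the main obstacle. We must control not just the vertices of $L_i^k$ but its entire convex hull, and push it inside the \emph{enlarged} neighborhood $\Gamma^k_{\epsilon}$. The idea is this: by the decomposition lemma $CH(\mathcal{C}^k)\subset\Gamma^k$, and $\Gamma^k$ sits in the interior of $\Gamma^k_{\epsilon}$ with a positive ``collar'' — there is $\rho_k>0$ such that the $\rho_k$-neighborhood of $CH(\mathcal{C}^k)$ is contained in $\Gamma^k_{\epsilon}$ (this uses compactness of $CH(\mathcal{C}^k)$ and that it misses the complement of $\Gamma^k_{\epsilon}$). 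Now the vertices of $L_i^k$ are, for $i$ large, within $\rho_k$ of points of $\mathcal{C}^k\subset CH(\mathcal{C}^k)$ by pointwise convergence, hence lie in the $\rho_k$-neighborhood of $CH(\mathcal{C}^k)$; but that neighborhood is itself convex (a neighborhood of a convex set is convex), so it contains $CH(L_i^k)$, giving $CH(L_i^k)\subset\Gamma^k_{\epsilon}$. Call the threshold $N_k^{(2)}$.

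Finally set $N=\max_{k}\max\{N_k^{(1)},N_k^{(2)},N_k^{(3)}\}$, a maximum over finitely many indices. For every $i\geq N$ and every $k$ all three conclusions hold simultaneously, completing the proof. The only subtlety to watch is the interplay in conclusion (2) between the fixed geometry ($\Gamma^k\subset\Gamma^k_\epsilon$ with a collar) and the convergence estimate, and the use of convexity of tubular neighborhoods of convex sets; everything else is a routine extraction of thresholds from the convergence definitions.
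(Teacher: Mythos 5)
Your proposal is correct and follows exactly the route of the paper's own (very terse) proof, which simply asserts that condition (1) follows from uniform convergence in total curvature via Lemma~\ref{lem:loc-cov} and that conditions (2) and (3) follow from pointwise convergence. Your elaboration of (2) --- using a positive collar around the compact set $CH(\mathcal{C}^k)$ inside $\Gamma^k_{\epsilon}$ and the convexity of a $\rho_k$-neighborhood of a convex set --- correctly supplies the detail the paper leaves implicit, and the final step of taking a maximum over the finitely many thresholds is exactly what the finiteness of the decomposition is for.
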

\begin{proof}
The first condition follows from the uniform convergence in total curvature (Lemma~\ref{lem:loc-cov}), and the second and third follow from pointwise convergence. 
\end{proof}

Now we are ready to reduce each $L_i^k$ to the segment $\overline{u_i^{k-1}u_i^k}$ by median pushes. In order to prove there is no self-intersection of $L_i$ during the pushes, we present two lemmas below. The following lemma was established by a recent preprint \cite{JL-ang-conv}. For the sake of completeness, we give the sketch of the proof here. 
\begin{lemma}[Non-self-intersection criteria] \textup{\cite{JL-ang-conv}} \label{lem:non-int}
Let $P=(P_0,P_1,\cdots,P_n)$ be an open PL curve in $\mathbb{R}^3$.  If $ T_{\kappa}(P)<\pi$, then $P$ is simple.
\end{lemma}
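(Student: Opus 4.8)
The plan is to argue by contradiction: suppose the open PL curve $P = (P_0, P_1, \dots, P_n)$ has a self-intersection, and derive that $T_\kappa(P) \geq \pi$. If $P$ self-intersects, then there are two non-adjacent edges of $P$ that meet, or a vertex lying on a non-incident edge; in either case one can extract from $P$ a \emph{closed} PL subcurve — a polygonal loop — by taking the portion of $P$ between the two points of intersection (and closing it up at the common point). Call this closed subpolygon $Q$. The key point is that $Q$ is a genuine closed polygon, so Fenchel's Theorem (Theorem~\ref{thm:gen-fenchel}) applies and gives $T_\kappa(Q) \geq 2\pi$.

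Next I would compare $T_\kappa(Q)$ with $T_\kappa(P)$. The closed polygon $Q$ is obtained from a contiguous sub-chain of $P$ together with one extra ``closing'' operation at the self-intersection point, where two sub-edges of $P$ are joined. Each exterior angle of $Q$ at an interior vertex of that sub-chain is an exterior angle of $P$ as well (the sub-edges are collinear with the original edges), so those contribute at most $T_\kappa(P)$. The closure introduces at most two new exterior angles — one at each ``end'' of the sub-chain where it is bent back to close the loop — and each such exterior angle is at most $\pi$ by Definition~\ref{def:exterior_angles}. Actually, a cleaner bookkeeping is available: take $Q$ to be the loop formed starting and ending \emph{at the self-intersection point itself}, so that $Q$ has exactly one ``new'' vertex (the intersection point), contributing one exterior angle $\leq \pi$, while every other vertex of $Q$ is a vertex of $P$ with the same exterior angle. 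This yields
$$2\pi \leq T_\kappa(Q) \leq T_\kappa(P) + \pi,$$
hence $T_\kappa(P) \geq \pi$, contradicting the hypothesis $T_\kappa(P) < \pi$. Therefore $P$ is simple.

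The main obstacle is the combinatorial case analysis of \emph{how} a PL curve can fail to be simple and the careful accounting of exterior angles in forming $Q$: one must handle an interior-edge crossing, a vertex-on-edge incidence, and overlapping collinear edges uniformly, and verify in each case that closing up the extracted loop adds total curvature at most $\pi$ (i.e.\ at most one ``free'' exterior angle bounded by $\pi$, with all remaining angles already counted in $T_\kappa(P)$). Degenerate configurations — e.g.\ the loop $Q$ being traversed with a backtrack, or the intersection occurring at a vertex of $P$ — need a remark to confirm the exterior-angle bound still holds and that $Q$ is a legitimate closed curve to which Theorem~\ref{thm:gen-fenchel} applies. Since the paper only asks for a sketch here (the full argument being in \cite{JL-ang-conv}), I would present the contradiction-plus-Fenchel skeleton above and note that the exterior-angle accounting is routine.
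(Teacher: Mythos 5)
Your proposal is correct and follows essentially the same route as the paper's own sketch: extract a closed loop at the self-intersection, apply Fenchel's Theorem to get total curvature at least $2\pi$, and observe that all exterior angles of the loop except the single one at the intersection point (which is at most $\pi$) are already exterior angles of $P$, yielding $T_\kappa(P) \geq \pi$. Your bookkeeping of the inequality $2\pi \leq T_\kappa(Q) \leq T_\kappa(P) + \pi$ and your attention to the degenerate cases are, if anything, slightly more careful than the paper's one-line accounting.
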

\begin{proof}
Assume to the contrary that $P$ is self-intersecting. Then there must exist at least one closed loop. Consider a closed loop. By Fenchel's theorem, the total curvature of the closed loop is at least $2\pi$. The total curvature is the sum of the exterior angles, among which at most one angle is not counted as an exterior angle of $P$. But an exterior angle is less than $\pi$. So the total curvature of $P$ is at lest $2\pi-\pi=\pi$, which is a contradiction. 
\end{proof}

Milnor \cite{Milnor1950} showed the total curvature remains the same or decreases ``after" deforming a triangle to a line segment, and this can be trivially extended to show that the total curvature remains the same or decreases ``during" the whole process of deforming a triangle to a line segment, as expressed in Lemma~\ref{lem:dec-angles}.

\begin{lemma} \label{lem:dec-angles}
If a vertex of a PL curve in $\mathbb{R}^3$ undergoes a median push, then the total curvatures of new open PL curves formed during the push remain the same or decrease\footnote{This holds not only for the median push, but also for any push with a trace lying on the interior of a triangle indicated in Definition~\ref{def:mp}. }. 
\end{lemma}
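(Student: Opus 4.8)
## Proof Proposal for Lemma~\ref{lem:dec-angles}

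The plan is to reduce the ``during'' statement to Milnor's ``after'' statement by observing that a median push is itself a continuous one-parameter family of configurations, and that \emph{every intermediate configuration is the endpoint of a shorter median-type push applied to the same triangle}. Concretely, let the vertex undergoing the push be $B$, with neighbors $A$ and $C$ along the PL curve, and let $M$ be the midpoint of $\overline{AC}$. Parametrize the push by $s \in [0,1]$, sending $B$ to $B(s) = (1-s)B + sM$, so $B(0) = B$ and $B(1) = M$. For a fixed $s \in [0,1)$, the point $B(s)$ still has the property that it lies on the median of the triangle $\triangle ABC$ issuing from $B$ — in fact $B(s)$ is the apex of the triangle $\triangle A\,B(s)\,C$, and pushing $B(s)$ along \emph{its} median to the midpoint of $\overline{AC}$ (which is again $M$, since $M$ depends only on $A$ and $C$) is a genuine median push in the sense of Definition~\ref{def:mp}. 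So the configuration at parameter $s$ is obtained from the configuration at parameter $0$ by a median push, and the terminal configuration $s=1$ is obtained from the configuration at any $s$ by a median push as well.

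The key step is then to invoke Milnor's ``after'' result — the cited \cite[Lemma 1.1, Corollary 1.2]{Milnor1950}, which says that replacing the two segments $\overline{A B}, \overline{B C}$ by the two segments $\overline{A M}, \overline{M C}$ (equivalently, performing a median push to completion) does not increase total curvature of the PL curve. I would apply this not to the full push but to the ``tail'' push from parameter $s$ to parameter $1$: since the curve at parameter $1$ is the median-push completion of the curve at parameter $s$, we get
$$
T_{\kappa}\bigl(P(1)\bigr) \le T_{\kappa}\bigl(P(s)\bigr) \qquad \text{for every } s \in [0,1].
$$
This already gives ``monotone down to the endpoint,'' but the statement asks for the stronger claim that $T_{\kappa}(P(s))$ is non-increasing in $s$ (``remains the same or decreases'' throughout). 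For that, fix $0 \le s_1 \le s_2 \le 1$ and observe that $P(s_2)$ is the median-push completion of the configuration in which the apex sits at $B(s_1)$ but is pushed only partway — however $B(s_2)$ lies on the segment $\overline{B(s_1)\,M}$, so $P(s_2)$ is obtained from $P(s_1)$ by a push whose trace is an interval of the median of $\triangle A\,B(s_1)\,C$; by the footnote's extension of Milnor's lemma (any push with trace on the interior of the triangle does not increase total curvature), $T_{\kappa}(P(s_2)) \le T_{\kappa}(P(s_1))$. Hence $s \mapsto T_{\kappa}(P(s))$ is non-increasing, which is exactly the claim.

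Two bookkeeping points need care. First, only the three exterior angles of $P$ that can change under the push — the one at $B(s)$ and the ones at $A$ and at $C$ — are affected; all other exterior angles of $P$ are constant in $s$, so it suffices to control the sum of these three, which is precisely what Milnor's local lemma on a three-segment (or shorter) PL arc addresses, with the convention that a $180^\circ$ angle (collinear vertex) contributes $0$. Second, I should note the degenerate cases: if $A$, $B$, $C$ are collinear the push changes nothing and the total curvature is constant; and if during the push the moving apex $B(s)$ becomes collinear with $A$ and $C$ (it does exactly at $s=1$, but possibly never before), the corresponding exterior angle simply drops to $0$, consistent with monotonicity. The main obstacle I anticipate is not conceptual but expository: making rigorous the reduction that ``an intermediate configuration of a median push is the source configuration of another median push,'' i.e. verifying that $M$ is the midpoint of $\overline{AC}$ independently of where the apex currently sits and that the trace of the restricted push lies in the closed triangle $\triangle A\,B(s_1)\,C \subseteq \triangle ABC$, so that the cited monotonicity of \cite{Milnor1950} (as extended in the footnote) legitimately applies at each stage.
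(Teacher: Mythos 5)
The paper itself offers no proof of this lemma beyond the remark that Milnor's ``after'' statement ``can be trivially extended,'' so the substance of your proposal is exactly the part that needs to be supplied --- and there is a circularity at the decisive step. Your reduction correctly isolates the only nontrivial inequality, namely $T_{\kappa}(P(s_2)) \le T_{\kappa}(P(s_1))$ for $0 \le s_1 \le s_2 < 1$; the move from $B(s_1)$ to $B(s_2)$ is a \emph{partial} push that does not reach the midpoint $M$, hence is not a median push of $\triangle A\,B(s_1)\,C$, and at that point you justify the inequality ``by the footnote's extension of Milnor's lemma.'' But that footnote is attached to the very lemma being proven; it is a strictly stronger restatement of the claim, not an available result. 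Milnor's Lemma 1.1 and Corollary 1.2 concern omitting a vertex and inscribing a polygon, i.e., only the completed push, so the partial-push inequality does not follow from them without a further argument. Your observation that $T_{\kappa}(P(1)) \le T_{\kappa}(P(s))$ is correct but points in the wrong direction: it bounds the final curve by the intermediate one, whereas the lemma requires bounding the intermediate curves by the original.

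The gap can be closed with one additional geometric step, after which your framework goes through. Since $B(s_2)$ lies in the closed triangle $A\,B(s_1)\,C$, the ray from $A$ through $B(s_2)$ meets the opposite side $\overline{B(s_1)C}$ at some point $D$. Writing $A_-$ and $C_+$ for the neighbors of $A$ and $C$ along the curve, the polygon $A_-\,A\,D\,C\,C_+$ is inscribed in $A_-\,A\,B(s_1)\,C\,C_+$, and $A_-\,A\,B(s_2)\,C\,C_+$ is in turn inscribed in $A_-\,A\,D\,C\,C_+$ because $B(s_2) \in \overline{AD}$. Two applications of Milnor's Corollary 1.2 (all exterior angles other than the three affected ones being constant, as you note) then give $T_{\kappa}(P(s_2)) \le T_{\kappa}(P(s_1))$; this argument also proves the footnote's generalization to any push whose trace stays in the triangle, rather than assuming it.
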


\begin{lemma}\label{lem:dtel}
For each $k = 1, \ldots, n$, use median pushes to reduce $L_i^k$ to the line segment $\overline{u_i^{k-1}u_i^k}$. Then during these pushes, $L_i$ remains simple, and hence the resultant PL curve $\bigcup_{k=1}^n \overline{u_i^{k-1}u_i^k}$ is ambient isotopic to the original PL curve $L_i$.
\end{lemma}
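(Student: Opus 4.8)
The plan is to carry out the reduction one sub-curve $L_i^k$ at a time, in order $k = 1, 2, \ldots, n$, and to argue that at no stage does a self-intersection of the (partially reduced) polygon $L_i$ occur. The key structural fact to exploit is the ``local confinement'' supplied by Lemma~\ref{lem:cds}: for $i \geq N$, the whole sub-curve $L_i^k$ together with its convex hull $CH(L_i^k)$ sits inside the enlarged tube $\Gamma^k_\epsilon$, and $\Gamma^k_\epsilon$ meets only its two neighbors $\Gamma^{k-1}_\epsilon$ and $\Gamma^{k+1}_\epsilon$. A median push on a vertex of $L_i^k$ moves that vertex along a segment lying on a triangle whose vertices lie on $L_i^k \subset \Gamma^k_\epsilon$; since $\Gamma^k_\epsilon$ is a tubular neighborhood of a convex-hull-confined sub-curve, the trace of the push — and indeed every intermediate polyline formed from $L_i^k$ during the reduction — stays inside $CH(L_i^k) \subset \Gamma^k_\epsilon$. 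Hence the deformation of $L_i^k$ is ``screened off'' from the rest of $L_i$, which lies in $\bigcup_{j \neq k} \Gamma^j_\epsilon$ and only touches $\Gamma^k_\epsilon$ through the already-fixed endpoints $u_i^{k-1}, u_i^k$ (shared with the adjacent, possibly already-reduced pieces).

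So the non-self-intersection argument splits into two parts. First, within the moving block: by Lemma~\ref{lem:dec-angles}, every intermediate open PL curve obtained from $L_i^k$ during its median pushes has total curvature $\leq T_\kappa(L_i^k) < \pi/2 < \pi$ (using condition (1) of Lemma~\ref{lem:cds}), so by Lemma~\ref{lem:non-int} each such intermediate polyline is itself simple. Second, between the moving block and the rest: since the intermediate polyline from $L_i^k$ stays in $\Gamma^k_\epsilon$, and the complementary portion of $L_i$ stays in $\bigcup_{j\neq k}\Gamma^j_\epsilon$, any intersection between the two would have to occur in $\Gamma^k_\epsilon \cap \Gamma^j_\epsilon$, which is nonempty only for $j = k\pm 1$; there the only common points are the shared endpoints $u_i^{k-1}, u_i^k$, which are held fixed throughout and are genuine vertices of the polygon, not crossings. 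Concatenating the simple moving block with the simple remainder along the fixed shared endpoints therefore yields a simple closed (or open) PL curve at every instant of every push. Running over all $n$ sub-curves in turn, $L_i$ stays simple throughout the entire sequence of pushes; the endpoints $u_i^k$ are never moved, so once $L_i^k$ has been reduced to $\overline{u_i^{k-1}u_i^k}$ it is not disturbed by the reduction of later pieces.

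For the ambient isotopy conclusion: a median push of a single vertex $B$ to the midpoint of $AC$ inside the fixed triangle $\triangle ABC$ is realized by a continuous family of PL curves $L_i(s)$, $s\in[0,1]$, each simple by the above; this is precisely a PL ambient isotopy of the polygon in $\mathbb{R}^3$ (the push extends to an ambient isotopy supported near the triangle, exactly as in the classical ``2D push'' of \cite{Bing1983}). Composing the finitely many such isotopies — one per vertex removed, across all $n$ sub-curves — gives an ambient isotopy of $\mathbb{R}^3$ carrying $L_i$ onto $\bigcup_{k=1}^n \overline{u_i^{k-1}u_i^k}$.

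The step I expect to be the main obstacle is making rigorous the claim that every intermediate polyline arising from $L_i^k$ during its pushes remains inside $CH(L_i^k)$ (and hence inside $\Gamma^k_\epsilon$). A median push replaces a vertex $B$ on segment from $A$ through $B$ to $C$ by the midpoint $M$ of $AC$; the new edges $\overline{AM}, \overline{MC}$ lie in $\triangle ABC \subseteq CH(\{A,B,C\}) \subseteq CH(L_i^k)$, and any partial-push position of $B$ lies on the median, also inside that triangle — so each single push keeps the polyline in $CH(L_i^k)$. The subtlety is that successive pushes act on a polyline whose vertices are no longer the original vertices of $L_i^k$; one must check that the convex hull does not grow, i.e. that after any number of pushes the vertex set still lies in $CH(L_i^k)$, which follows inductively since each push introduces only midpoints of segments between existing vertices. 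Once this containment is in hand, the disjointness of the non-adjacent tubes $\Gamma^k_\epsilon$ does the rest, and the argument closes.
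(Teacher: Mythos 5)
There is a genuine gap in the step that handles the \emph{adjacent} sub-curves. Your argument rules out intersections of the deforming block $L_i^k$ with non-adjacent pieces via the disjointness of the tubes $\Gamma^k_\epsilon$ and $\Gamma^j_\epsilon$, $|j-k|>1$, and it rules out self-intersection \emph{within} $L_i^k$ via Lemmas~\ref{lem:dec-angles} and~\ref{lem:non-int}. But for $j=k\pm 1$ the tubes $\Gamma^k_\epsilon$ and $\Gamma^{k\pm1}_\epsilon$ overlap in a nonempty open region (that is the whole point of the $\epsilon$-extension), and both $L_i^k$ and $L_i^{k\pm1}$ may have arcs of positive length inside that overlap. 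Your sentence ``there the only common points are the shared endpoints $u_i^{k-1},u_i^k$'' is precisely the assertion that the moving block never collides with its neighbors, i.e.\ it is the statement to be proved for the adjacent case, not something supplied by the tube geometry. Nothing in your containment argument prevents an intermediate vertex of the pushed $L_i^k$, while staying inside $CH(L_i^k)\subset\Gamma^k_\epsilon$, from crossing an edge of $L_i^{k+1}$ that also lies in $\Gamma^k_\epsilon\cap\Gamma^{k+1}_\epsilon$.

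The paper closes exactly this case by a different use of the curvature bound: condition (1) of Lemma~\ref{lem:cds} gives $T_\kappa(L_i^{k-1}\cup L_i^k)<\pi$ and $T_\kappa(L_i^k\cup L_i^{k+1})<\pi$, Lemma~\ref{lem:dec-angles} keeps these bounds valid throughout the median pushes, and Lemma~\ref{lem:non-int} applied to the \emph{union} of two adjacent blocks then says that union is simple at every instant --- which is precisely the statement that the pushed $L_i^k$ does not meet $L_i^{k\pm1}$. So the fix is to apply your ``total curvature $<\pi$ implies simple'' argument not to $L_i^k$ alone but to the concatenations $L_i^{k-1}\cup L_i^k$ and $L_i^k\cup L_i^{k+1}$. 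The rest of your write-up (the convex-hull containment under successive pushes, the non-adjacent case, and the extension of each push to an ambient isotopy) is sound and matches the paper's intent.
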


\begin{proof}
Note that the condition (1) in Lemma~\ref{lem:cds} implies that $T_{\kappa}(L_i^{k-1} \cup L_i^k) < \pi$ and $T_{\kappa}(L_i^k \cup L_i^{k+1}) < \pi$. Lemma~\ref{lem:non-int} and~\ref{lem:dec-angles} show that the pushed $L_i^k$ does not intersect its neighbors $L_i^{k+1}$ or $L_i^{k-1}$. Since $CH(L_i^k) \subset \Gamma^k_{\epsilon}$ (Condition (2)), and $\Gamma^k_{\epsilon}$ does not intersect $\Gamma^j_{\epsilon}$ for $j\neq k-1$ or $k+1$,  the perturbed $L_i^k$ stays inside $\Gamma^k_{\epsilon}$ and does not intersect $L_i^j$ for $j\neq k-1$ or $k+1$. Then the conclusion follows. 
\end{proof}

For each $k=1,\ldots,n$, connecting the end points $v^{k-1}$ and $v^k$ of $\mathcal{C}^k$, we obtain the polyline $\bigcup_{k=1}^n \overline{v^{k-1}v^k}$.

\begin{lemma}\label{lem:tpbk}
The polyline $\bigcup_{k=1}^n \overline{v^{k-1}v^k}$ is ambient isotopic to $\bigcup_{k=1}^n \overline{u_i^{k-1}u_i^k}$.
\end{lemma}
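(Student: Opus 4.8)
The plan is to show that the two polylines $\bigcup_{k=1}^n \overline{v^{k-1}v^k}$ and $\bigcup_{k=1}^n \overline{u_i^{k-1}u_i^k}$ are ambient isotopic by exhibiting a ``vertex-by-vertex'' straight-line homotopy that moves each $u_i^k$ to the corresponding $v^k$, and then arguing that this homotopy never creates a self-intersection, provided $i \geq N$ is large enough (invoking Lemma~\ref{lem:cds}). Concretely, for $s \in [0,1]$ define the PL curve $P_i(s)$ whose $k$-th vertex is $w^k(s) = (1-s)u_i^k + s\,v^k$, with segments interpolated linearly; then $P_i(0) = \bigcup_k \overline{u_i^{k-1}u_i^k}$ and $P_i(1) = \bigcup_k \overline{v^{k-1}v^k}$. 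The key geometric observation is that, by condition (3) of Lemma~\ref{lem:cds}, each $w^k(s)$ stays within distance $r/2$ of $v^k$, hence stays inside the tube $\Gamma_{\mathcal{C}}$; more importantly, each deformed segment $\overline{w^{k-1}(s)w^k(s)}$ stays inside the enlarged tubular neighborhood $\Gamma^k_{\epsilon}$ around $\mathcal{C}^k$ — since both $\overline{u_i^{k-1}u_i^k}$ (by $\mu(\overline{u^{k-1}_iu^k_i},\mathcal{C}^k)<r/2$, giving $CH(\overline{u_i^{k-1}u_i^k})\subset\Gamma^k_\epsilon$) and $\overline{v^{k-1}v^k}$ lie in that convex set $\Gamma^k_\epsilon$, the convex combination of their endpoints yields a segment whose convex hull also lies in $\Gamma^k_\epsilon$.

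First I would fix $N$ from Lemma~\ref{lem:cds} and, enlarging it if necessary, ensure also that the segments $\overline{v^{k-1}v^k}$ lie in $\Gamma^k_\epsilon$ (this is automatic since $CH(\mathcal{C}^k)\subset\Gamma^k$, as established in the lemma preceding Lemma~\ref{lem:cds}, and $\Gamma^k \subset \Gamma^k_\epsilon$). Then I would verify that for every $s\in[0,1]$, $CH(\overline{w^{k-1}(s)w^k(s)}) \subset \Gamma^k_\epsilon$, using convexity of $\Gamma^k_\epsilon$ and the fact that its four ``corner'' points $u_i^{k-1}, u_i^k, v^{k-1}, v^k$ all lie in $\Gamma^k_\epsilon$. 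Next, since $\Gamma^k_\epsilon$ meets only $\Gamma^{k\pm1}_\epsilon$, non-adjacent segments of $P_i(s)$ can never intersect at any time $s$; adjacent segments $\overline{w^{k-1}(s)w^k(s)}$ and $\overline{w^k(s)w^{k+1}(s)}$ share only the vertex $w^k(s)$ by construction, and a segment meeting itself is impossible. This shows $P_i(s)$ is a simple PL curve for all $s$, so by the standard fact that an isotopy of a PL arc/loc through simple PL curves (with endpoints fixed or moving along a path) extends to an ambient isotopy of $\mathbb{R}^3$ — the same mechanism already used implicitly via Lemma~\ref{lem:dtel} — we conclude the two polylines are ambient isotopic.

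The main obstacle is the step verifying simplicity throughout the homotopy, specifically ruling out intersection of adjacent segments other than at the shared vertex, and ruling out a segment passing through a non-adjacent vertex. Confinement to disjoint tube pieces $\Gamma^k_\epsilon$ handles the non-adjacent case cleanly, so the real work is the adjacent case: one must check that as $w^k(s)$ moves along its straight path, the hinge at $w^k(s)$ never opens to a straight angle in a way that would let $\overline{w^{k-1}(s)w^k(s)}$ and $\overline{w^k(s)w^{k+1}(s)}$ overlap along a segment, and that neither segment degenerates (i.e.\ $w^{k-1}(s)\neq w^k(s)$ for all $s$). The latter follows because $u_i^{k-1}, u_i^k$ are distinct and within $r/2$ of the distinct points $v^{k-1}, v^k$ which are separated by more than $r$ (they lie in different normal-plane-bounded slabs $H^k$, $H^{k+1}$ that are disjoint), so $w^{k-1}(s)$ and $w^k(s)$ stay separated by a positive distance uniformly in $s$. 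With degeneracy excluded and the tube-confinement argument in place, collinear-overlap of the two adjacent segments would force all of $w^{k-1}(s), w^k(s), w^{k+1}(s)$ onto one line with $w^k(s)$ between, but the slab-separation of $v^{k-1}$ from $v^{k+1}$ (they are in $H^k$ and $H^{k+2}$, hence on opposite sides of the slab containing $\mathcal{C}^k\cup\mathcal{C}^{k+1}$) combined with the $r/2$ bounds keeps $w^{k-1}(s)$ and $w^{k+1}(s)$ on opposite sides of the normal plane at $v^k$, forbidding such overlap; I would make this quantitative using the choice $r < d_{min}/2$ from Remark~\ref{rmk:pipe}.
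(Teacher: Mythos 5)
Your proposal is correct and follows essentially the same route as the paper's proof: move each $u_i^k$ linearly to $v^k$, use condition (3) of Lemma~\ref{lem:cds} to keep every intermediate segment inside $\Gamma^k_{\epsilon}$ so that non-adjacent segments (confined to tube pieces that meet only their immediate neighbors) cannot collide, and dispose of the adjacent-segment case by a local geometric argument --- where you in fact supply more detail than the paper's ``easy geometric analysis.'' One caveat: your appeal to ``convexity of $\Gamma^k_{\epsilon}$'' is not literally valid (a tubular neighborhood of a curved arc is not a convex set), but the containment it is meant to justify is precisely what the paper itself asserts from the $\frac{r}{2}$ bounds, and it can be recovered by noting that each point of the deformed segment is a convex combination of a point of $\overline{u_i^{k-1}u_i^k}$ (within $\frac{r}{2}$ of $\mathcal{C}^k$) and the corresponding point of $\overline{v^{k-1}v^k}$ (in $CH(\mathcal{C}^k)$), rather than by convexity of the tube.
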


\begin{proof}
Perturb $u_i^k$ to $v^k$, and the line segments move linearly from $\overline{u_i^{k-1}u_i^k}$ to  $\overline{u_i^{k-1}v^k}$, and from $\overline{u_i^ku_i^{k+1}}$ to $\overline{v^ku_i^{k+1}}$. Since $|u^k_i-v^k|<\frac{r}{2}$ and $\mu(\overline{u^{k-1}_iu^k_i}, \mathcal{C}^k)<\frac{r}{2}$ (Condition (3) in Lemma~\ref{lem:cds}), the perturbation stays inside $\Gamma^k_{\epsilon}$ which has a radius $r$. So during the perturbation, $\overline{u^{k-1}_i,u^k_i}$ and $\overline{u^k_i,u^{k+1}_i}$ do not intersect any line segments of $L_i$, possibly except their consecutive segments. But note that for each $k$, $u_i^k, v^k \in \Gamma_{\epsilon}^k \cap \Gamma_{\epsilon}^{k+1}$. An easy geometric analysis shows that this restricted area of the perturbation precludes the possibility for $\overline{u^{k-1}_i,u^k_i}$ and $\overline{u^k_i,u^{k+1}_i}$ intersecting their consecutive segments. So the perturbation does not cause intersections, and hence preserves the ambient isotopy. 
\end{proof}


\begin{theorem}[Isotopic Convergence Theorem]\label{thm:distcurv}
If  $\{C_i\}_{1}^{\infty}$ converges to $\mathcal{C}$ pointwise and in total curvature, then there exists an integer $N$ such that $C_i$ is ambient isotopic to $\mathcal{C}$ for all $i \geq N$.
\end{theorem}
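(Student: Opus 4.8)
The plan is to chain together the Section~\ref{sec:cdtc} lemmas and then finish with the same tubular-neighborhood isotopy criterion of \cite{Maekawa_Patrikalakis_Sakkalis_Yu1998} that powered Theorem~\ref{thm:insc}. First I would strengthen the hypothesis: Lemma~\ref{lem:loc-cov} shows that pointwise convergence together with convergence in total curvature already yields \emph{uniform} convergence in total curvature, so the sequence meets the hypotheses that every Section~\ref{sec:cdtc} lemma requires. Next, as announced at the opening of Section~\ref{sec:cdtc}, I would reduce to PL curves: using Lemma~\ref{lem:app} and Theorem~\ref{thm:insc}, replace each $C_i$ by a sufficiently fine inscribed PL curve $L_i$ that is ambient isotopic to $C_i$, chosen (triangle inequality) within $1/i$ of $C_i$ in both parametric measure distance and total curvature, so that $\{L_i\}_1^{\infty}$ still converges to $\mathcal{C}$ pointwise and in total curvature. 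It then suffices to prove the statement for the PL sequence $\{L_i\}_1^{\infty}$.

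Now fix, once and for all, the partition $\mathcal{C}^1,\dots,\mathcal{C}^n$ with $T_{\kappa}(\mathcal{C}^k)<\frac{\pi}{2}$ and $CH(\mathcal{C}^k)\subset\Gamma^k$, the parameter $\epsilon$, and the dilated tubes $\Gamma^k_{\epsilon}$ meeting only immediate neighbors, exactly as constructed in Section~\ref{sec:cdtc}. Lemma~\ref{lem:cds} supplies an $N$ so that for all $i\geq N$ and every $k$ the three listed conditions hold. For such $i$: Lemma~\ref{lem:dtel} (via Lemma~\ref{lem:non-int} and the monotonicity of total curvature under median pushes, Lemma~\ref{lem:dec-angles}) shows the median pushes carry $L_i$ ambient isotopically onto $\bigcup_{k=1}^n\overline{u_i^{k-1}u_i^k}$, and Lemma~\ref{lem:tpbk} gives an ambient isotopy from that polyline onto the \emph{fixed} inscribed polyline $P:=\bigcup_{k=1}^n\overline{v^{k-1}v^k}$. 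Hence $L_i$ is ambient isotopic to $P$ for every $i\geq N$.

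The remaining, and I expect hardest, step is to show that $P$ itself is ambient isotopic to $\mathcal{C}$; this is where the normal-plane geometry must be exploited directly. The polyline $P$ is inscribed in $\mathcal{C}$ with vertices $v^0,\dots,v^n$; each arc $\mathcal{C}^k$ has total curvature $<\frac{\pi}{2}$, and $\overline{v^{k-1}v^k}\subset CH(\mathcal{C}^k)\subset\Gamma^k$, so $P$ lies in the non-self-intersecting tube of radius $r$. Applying Lemma~\ref{lem:npwint} at each interior vertex $v^k=\mathcal{C}(t_k)$, with the flanking arcs $\mathcal{C}^k$ and $\mathcal{C}^{k+1}$ (each of total curvature $<\frac{\pi}{2}$), shows consecutive arcs are separated by the normal plane at $v^k$ except at $v^k$, while non-consecutive arcs are separated because they lie in disjoint $\Gamma^j$; and those same normal planes confine each segment $\overline{v^{k-1}v^k}$ to the convex slab $H^k$. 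This is precisely the configuration --- a PL curve inscribed in the tube, with the arcs of $\mathcal{C}$ separated by the normal planes that also bound the matching segments --- for which \cite{Maekawa_Patrikalakis_Sakkalis_Yu1998}, as quoted in the proof of Theorem~\ref{thm:insc}, yields an ambient isotopy $P\simeq\mathcal{C}$. Chaining the isotopies, $C_i$ is ambient isotopic to $L_i$, to $P$, to $\mathcal{C}$, for every $i\geq N$, which is the theorem. The points needing care are: that the partition can be chosen to meet all conditions simultaneously (small enough $\epsilon$, the per-piece curvature budget, and convex hulls inside the sub-tubes); that each $C_i$ is simple for $i$ large, so that ``$C_i$ ambient isotopic to $L_i$'' is even meaningful (this follows from tube containment together with the per-piece bound $T_{\kappa}(\mathcal{C}^k)<\frac{\pi}{2}$ and a Fenchel/Lemma~\ref{lem:non-int}-type argument applied to the corresponding sub-arcs of $C_i$); and extracting the actual isotopy $P\simeq\mathcal{C}$ from the cited criterion rather than merely the separation statement it rests on.
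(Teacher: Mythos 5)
Your proposal is correct and follows essentially the same route as the paper's own proof: reduce to an inscribed PL sequence via Lemma~\ref{lem:app} and Theorem~\ref{thm:insc}, fix the partition $\mathcal{C}^1,\dots,\mathcal{C}^n$ with its tubes, invoke Lemma~\ref{lem:cds}, chain Lemma~\ref{lem:dtel} and Lemma~\ref{lem:tpbk} to reach the polyline $\bigcup_{k}\overline{v^{k-1}v^k}$, and conclude that this polyline is ambient isotopic to $\mathcal{C}$ by the same tube-containment and normal-plane-separation criterion used in Theorem~\ref{thm:insc}. Your version is somewhat more explicit than the paper's (the $1/i$ closeness to preserve convergence of the PL sequence, the explicit appeal to Lemma~\ref{lem:loc-cov}, and the spelled-out use of Lemma~\ref{lem:npwint} for the final step), but the underlying argument is the same.
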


\begin{proof}
For each $C_i$ and $\epsilon>0$, there exists an inscribed PL curve $L_i$ of $C_i$ such that $L_i$ is sufficiently close (bounded by $\epsilon$) to $C_i$ pointwise and in total curvature by Lemma~\ref{lem:app}, and ambient isotopic to $C_i$ by Theorem~\ref{thm:insc}. Since ambient isotopy is an equivalence relation \cite{Livingston1993}, we now rely on Theorem~\ref{thm:insc} to consider, without loss of generality, a sequence of PL curves $\{L_i\}_1^{\infty}$ instead of $\{C_i\}_{1}^{\infty}$.

Note that $T_{\kappa}(\mathcal{C}^k) < \frac{\pi}{2}$ and the polyline $\bigcup_{k=1}^n \overline{v^{k-1}v^k}$ lies inside of the tubular neighborhood (since $CH(\mathcal{C}^k) \subset \Gamma_{\mathcal{C}}$). By the proof of Theorem~\ref{thm:insc} we know that these are sufficient conditions for $\mathcal{C}$ being ambient isotopic to $\bigcup_{k=1}^n \overline{v^{k-1}v^k}$. By the equivalence relation of ambient isotopy, Lemma~\ref{lem:tpbk} implies that $\mathcal{C}$ is ambient isotopic to $\bigcup_{k=1}^n \overline{u_i^{k-1}u_i^k}$, and Lemma~\ref{lem:dtel} further implies that $\mathcal{C}$ is ambient isotopic to $L_i$. 
\end{proof}

\section{Some Conceptual Algorithms and Potential Applications}
\label{ssec:alg}
The Isotopic Convergence Theorem has both theoretical and practical applications. Theoretically, it formulates criteria to show the same knot type in knot theory. Practically, it provides rigorous theoretical foundations to extend current algorithms in computer graphics and visualization to much richer classes of curves than the splines already investigated \cite{TJP08}.

The following are the general procedures derived from our Theorem~\ref{thm:insc} and Theorem~\ref{thm:distcurv}. For a specific problem, further algorithmic development will depend upon characteristics of the class of curves. If the curve is ``nice" in the sense that the total curvature and the radius of a tubular surface is easy to compute, then it is easy to develop an algorithm. Such ``nice" curves include a rational cubic spline parameterized by arc length, for which the total curvatures can be easily computed, and the radius of a tubular surface can be found according to an existing result \cite{Maekawa_Patrikalakis_Sakkalis_Yu1998}. Otherwise, for some other curves, the computation of the total curvatures and the radius of a tubular neighborhood may be difficult and is beyond the scope of the details considered here, even while the theorems provide a broad framework within which these subtleties can be considered.

\subsection{Using PL knots to represent smooth knots}\label{ssec:pkrsk}

Based on Lemma~\ref{lem:app} and Theorem~\ref{thm:insc}, a procedure can be designed such that it takes a smooth knot as input and picks finitely many points on it to form an ambient isotopic PL knot. We call this a \textit{PL representation} of the smooth knot. 

Recall that two criteria are sufficient for the isotopy between a compact $C^2$ curve $\mathcal{C}$ and its inscribed PL curve $\mathcal{L}$:
\begin{enumerate}
\item Each sub-curve of $\mathcal{C}$ determined by two consecutive vertices of $\mathcal{L}$ has a  total curvature less than $\frac{\pi}{2}$.
\item The PL curve $\mathcal{L}$ lies inside of the tubular surface for $\mathcal{C}$ with radius $r$. (This can be achieved by making the Hausdorff distance between $\mathcal{L}$ and $\mathcal{C}$ less than $r$.) 
\end{enumerate}

\noindent {\it \large The Outline of Forming PL Representations:}
\begin{enumerate}
\item Select $\mathcal{C}(0)$ as the initial vertex of $\mathcal{L}$, denoted as $v_0$. 
\item Set\footnote{This $\epsilon$ value is not unique. Many others also work.} $\epsilon=0.1$. Select\footnote{It is not necessary for $T_{\kappa}(\mathcal{C}_{[0,t_1]})$ to be exactly $\frac{\pi}{2}-\epsilon$. For efficiency, it is fine to end up with a value not equal to $\frac{\pi}{2}-\epsilon$ as long as it is less than $\frac{\pi}{2}$. This aspect will require a subroutine to be developed that will likely vary over the class of curves considered and this detail is beyond the scope of the current investigation.} $t_1 \in [0,1]$ such that $T_{\kappa}(\mathcal{C}_{[0,t_1]})=\frac{\pi}{2}-\epsilon$. Let the second vertex of $\mathcal{L}$ be $\mathcal{C}(t_1)$, denoted as $v_1$. 
\item Similarly pick $t_2$ to obtain $v_2$. Continue until we reach the end point $\mathcal{C}(1)$, denoted as $v_n$. This process terminates because $\mathcal{C}$ is compact. In the end, we obtain an $\mathcal{L}$, and sub-curves of $\mathcal{C}$ with total curvatures being less than $\frac{\pi}{2}$.
\item Verify if the Hausdorff distance between $\mathcal{L}$ and $\mathcal{C}$ is less than $r$; If not, then select midpoints: $\mathcal{C}(\frac{t_j+t_{j+1}}{2})$ for $j\in \{0,1,\ldots,n-1\}$, denoted as $v_{\frac{2j+1}{2}}$, to form a new inscribed PL curve determined by vertices: $$\{v_0, v_{\frac{1}{2}}, v_1, v_{\frac{3}{2}},v_2, \ldots, v_{n-1}, v_{\frac{2n-1}{2}},v_n\}.$$ 
\item Repeat 4 until the Hausdorff distance between $\mathcal{L}$ and $\mathcal{C}$ is less than $r$. This process of selecting midpoints implies the pointwise convergence. So this process terminates.
\end{enumerate}

\subsection{Testing isotopic convergence}
For a $C^2$ curve $\mathcal{C}$ and a sequence $\{C_i\}_1^{\infty}$ of piecewise $C^2$ curves, where $\{C_i\}_1^{\infty}$ converges to $\mathcal{C}$ pointwise and in total curvature, we shall design a procedure to determine a positive integer $N$ such that whenever $i \geq N$, $C_i$ is ambient isotopic to $\mathcal{C}$. Use $T_{\kappa}(\cdot)$ to denote the total curvature, $CH(\cdot)$ the convex hull, and $\mu(\cdot)$ the Hausdorff distance. \\
\\
\noindent {\it \large The Outline of Testing Isotopic Convergence:}
\begin{enumerate}
\item Divide $\mathcal{C}$ into sub-curves $\mathcal{C}^k$ for $k=1,\cdots,n$ such that
\begin{itemize}
\item $T_{\kappa}(\mathcal{C}^k) < \frac{\pi}{2}$; and
\item $CH(\mathcal{C}^k) \subset \Gamma^k$.
\end{itemize}
\item  Set $i : =1$.
\item  Use the above technique to form a \textit{PL Representation} $L_i$ for the piecewise $C^2$ curve $C_i$ such that $L_i$ is ambient isotopic to $C_i$. 
\item  Let $L_i^k$ be the sub-curve of $L_i$ corresponding to $\mathcal{C}^k$. Denote the end points of $L_i^k$ as $u^{k-1}_i$ and $u^k_i$, and the corresponding end points of $\mathcal{C}^k$ as $v^{k-1}$ and $v^k$.Verify three criteria: 
\begin{itemize}
\item $T_{\kappa}(L_i^k) < \frac{\pi}{2}$;
\item $CH(L_i^k) \subset \Gamma^k_{\epsilon}$; and
\item $|u^k_i-v^k|<\frac{r}{2}$ and $\mu(\overline{u^{k-1}_iu^k_i}, \mathcal{C}^k)<\frac{r}{2}$. 
\end{itemize}
\item  If these criteria are satisfied, then let $N:=i$ and stop. Otherwise let $i:=i+1$ and go to (3).
\end{enumerate}
We know that $\{C_i\}_1^{\infty}$ pointwise converges to $\mathcal{C}$ and uniformly converges to $\mathcal{C}$ in total curvature (Lemma~\ref{lem:loc-cov}), so there exists a finite $i$ such that these three criteria are achieved, which means the above process terminates. By Isotopic Convergence Theorem, we obtain the ambient isotopy.

\subsection{A potential application for molecular simulations}\label{sec:avs}

It is often of interest to consider geometric models that are perturbed over time. For chemical simulations of macro-molecules, the algorithms in high performance computing (HPC) environments will produce voluminous numerical data describing how the molecule twists and writhes under local chemical and kinetic changes. These are reflected in changed co-ordinates of the geometric model, called perturbations. To produce a scientifically valid visualization, it is crucial that topological artifacts are not introduced by the visual approximations \cite{TJP08}. A primary distinction between the approximations created here in Section~\ref{ssec:pkrsk} and those based on Taylor's Theorem \cite{TJP08} is the expression here of the upper bound for  total curvature to be less than $\frac{\pi}{2} - \epsilon$.  

An earlier perturbation result is limited to PL curves  \cite{L.-E.Andersson2000}. Other ambient isotopic approximation methods rely upon the curve being a spline \cite{JL-bez-iso, Moore_Peters_Roulier2007}. The example in the next section needs not be a spline. Our approach provides a more general result of sufficient conditions for both approximation and perturbations which do not change topological features. Precisely, our Isotopic Convergence Theorem implies that as long as the convergence criterion of pointwise convergence and convergence in total curvature is satisfied, then ambient isotopy is preserved. 

\subsection{A representative example of offset curves}\label{sec:reoc}
Offset curves are defined as locus of the points which are at constant distant along the normal from the generator
curves \cite{Maekawa1999}. It is well-known \cite[p.~553 ]{Piegl} that offsets of spline curves need not be splines. They are widely used in various applications, and the related approximation problems were frequently studied. A literature survey on offset curves and surfaces prior to 1992 was conducted by Pham \cite{Pham1992}, and another such survey between 1992 and 1999 was given by Maekawa \cite{Maekawa1999}. Here we show a representative example as a catalyst to ambient isotopic approximations of offset curves. 

Let $\mathcal{C}(t)$ be a compact, regular, $C^2$, simple, space curve parametrized on $[a,b]$, whose curvature $\kappa$ never equals $1$. Then define an offset curve by
$$\Omega(t) = \mathcal{C}(t) + N(t),$$
where $N(t)$ is the normal vector at $t$, for $t \in [a,b]$. 

For example, let $\mathcal{C}(t)=(2\cos t, 2\sin t, t)$ for $t\in[0,2\pi]$ be a helix, then it is an easy exercise for the reader to verify that the above assumptions of $\mathcal{C}$ are satisfied, with $\kappa=\frac{2}{5}$. Furthermore, it is straightforward to obtain the offset curve $\Omega(t) = (\cos t, \sin t, t)$. 

We first show that $\Omega(t)$ is regular. Let $s(t)=\int_{a}^t |\mathcal{C}'(t)| dt$ be the arc-length of $\mathcal{C}$. Then by Frenet-Serret formulas \cite{DoCarmo1976} we have
$$\Omega'(t)=\mathcal{C}'(t)+N'(t)$$
$$=\frac{ds}{dt}T + (-\kappa T+\tau B)\frac{ds}{dt}=(1-\kappa)\frac{ds}{dt} T+\tau \frac{ds}{dt} B,$$
where $T$ and $B$ are the unit tangent vector and binormal vector respectively. 
Since $T \perp B$, if $(1-\kappa)\frac{ds}{dt} \neq 0$ then $\Omega'(t) \neq 0$. But $(1-\kappa)\frac{ds}{dt} \neq 0$ because $\kappa \neq 1$ and $\mathcal{C}(t)$ is regular by the assumption. Thus $\Omega(t)$ is regular. 

Now we define a sequence $\{ \Omega_i(t) \}_{i=1}^{\infty}$ to approximate $\Omega(t)$ by setting
$$\Omega_i (t) = \mathcal{C}(t) + \frac{i-1}{i}N(t).$$
It is obvious that $\{ \Omega_i(t) \}_{i=1}^{\infty}$ pointwise converges to $\Omega(t)$. For the convergence in total curvature, note that $\lim_{i \rightarrow \infty} \Omega'_i(t) = \Omega'(t)$, $\lim_{i \rightarrow \infty} \Omega''_i(t) = \Omega''(t)$, and $|\Omega'(t)| \neq 0$ due to the regularity of $\Omega(t)$. Therefore
$$\lim_{i \rightarrow \infty}  \frac{ \Omega'_i(t) \times \Omega''_i(t)}{|\Omega'_i(t)|^3}= \frac{ \Omega'(t) \times \Omega''(t)}{|\Omega'(t)|^3}.$$
The convergence in total curvature follows. 

Consequently, by\footnote{Since the example satisfies $C^0$ and $C^1$ convergence and the paper \cite{Sullivan2008} shows ambient isotopy under $C^0$ and $C^1$ convergence, the ambient isotopy for this example also follows from the previous result \cite{Sullivan2008}. Here our purpose is to use it as a representation to show how the Isotopic Convergence Theorem can be applied.} the Isotopic Convergence Theorem (Theorem~\ref{thm:distcurv}), we conclude that there exists a positive integer $N$ such that $\Omega_i(t)$ is ambient isotopic to $\Omega(t)$ whenever $i > N$. 

\section{Conclusion}
We derived the Isotopic Convergence Theorem by topological and geometric techniques, as motivated by  applications for knot theory, computer graphics, visualization and simulations. 

Future research may use the Isotopic Convergence Theorem in knot classification, since it provides a method to pick finitely many points from a given knot, where the set of finitely many points determines the same knot type. 

\section*{Acknowledgments}
The authors thank Professor Maria Gordina, Professor John M Sullivan, Dr. Chen-yun Lin and  the referee for stimulating questions and insightful comments about this manuscript.

The authors express their appreciation for partial funding from the National Science Foundation under grants CMMI 1053077 and CNS 0923158.  All expressions here are of the authors, not of the National Science Foundation.  The authors also express their appreciation for support from IBM under JSA W1056109, where statements in this paper are the responsibility of the authors, not of IBM.
\bibliographystyle{plain}
\bibliography{ji-tjp-biblio}

\end{document}